\theoremstyle{plain}
\newtheorem{theorem}{Theorem}[section]
\newtheorem{proposition}[theorem]{Proposition}
\newtheorem{lemma}[theorem]{Lemma}
\theoremstyle{definition}
\newtheorem*{definition}{Definition}
\newtheorem{example}[theorem]{Example}
\theoremstyle{remark}
\newtheorem*{remark}{Remark}
\newcommand{\g}{$\mathfrak{g}$}
\begin{document}

\title[Odd nilpotent element and $\mathfrak{osp}(1|2)$-subalgebra in $\mathfrak{gl}(m|n)$]
{Odd nilpotent element and $\mathfrak{osp}(1|2)$-subalgebra in $\mathfrak{gl}(m|n)$}

\author[J. Ko]{Junseo Ko}
\address{Junseo Ko \\ Department of Mathematical Sciences \\ Seoul National University \\ Seoul 08826, Korea}
\email{kojs5725@snu.ac.kr}


\keywords{Lie superalgebra, odd nilpotent element, Jacobson-Morozov theorem}

\begin{abstract}
In this paper, we investigate the conditions under which an odd nilpotent element in $\mathfrak{gl}(m|n)$ lies inside an $\mathfrak{osp}(1|2)$-subalgebra. In the case of the classical Lie algebra $\mathfrak{gl}_m$, every nilpotent element can be embedded into an $\mathfrak{sl}_2$-subalgebra, which is the result of the Jacobson-Morozov Theorem. In the case of the Lie superalgebra $\mathfrak{gl}(m|n)$, we define super Jordan matrices and prove that an odd nilpotent element $e$ is contained in an $\mathfrak{osp}(1|2)$-subalgebra if and only if $e$ lies in the orbit of a super Jordan matrix consisting only of super Jordan blocks of odd size.
\end{abstract}

\maketitle


\section{Introduction}
In this paper, we study odd nilpotent elements and their associated $\mathfrak{osp}(1|2)$-subalgebras in $\mathfrak{gl}(m|n)$. Entova-Aizenbud and Serganova \cite{ES} established, via a categorical approach, a necessary and sufficient condition for an odd nilpotent element $e \in \text{Lie}(G)$\footnote{Here, $G$ denotes a quasi-reductive algebraic supergroup.} to induce an embedding of supergroups $\text{OSp}(1|2) \hookrightarrow G$ such that $e$ lies in the image of the corresponding Lie algebra homomorphism. Building on their result, we provide a constructive characterization of odd nilpotent elements $e \in \mathfrak{gl}(m|n)$ that can be contained in an $\mathfrak{osp}(1|2)$-subalgebra. Analogous to the one-to-one correspondence between nilpotent elements in $\mathfrak{sl}_n$ and partitions of $n$, we construct a combinatorial object that provides a one-to-one correspondence with nilpotent elements in $\mathfrak{gl}(m|n)$ that can be embedded into an $\mathfrak{osp}(1|2)$-subalgebra. \\
\indent In Section 2.1, we recall the definition of Lie superalgebras and review representative examples, including the general linear, special linear, and ortho-symplectic Lie superalgebras \cite{CW, Kac}. In Section 2.2, we introduce the theorems of Jacobson–Morozov \cite{Jac, Mor} and Kostant \cite{Ko}, which establish a one-to-one correspondence between the set of $\mathfrak{sl}_2$-triples and the set of nilpotent orbits in a semisimple Lie algebra $\mathfrak{g}$. We also describe the representatives of nilpotent orbits in $\mathfrak{sl}_n$ and construct explicit $\mathfrak{sl}_2$-triples containing a given nilpotent element in $\mathfrak{sl}_n$ \cite{CMc}.\\
\indent In section 3.1, we describe representatives of odd nilpotent orbits in $\mathfrak{gl}(m|n)$ \cite{JN}. In section 3.2, given an odd nilpotent element $e \in \mathfrak{gl}(m|n)$, we construct an explicit $\mathfrak{sl}_2$-triple containing the even nilpotent element $E = e^2$.\\
\indent In Section 4, we introduce the notion of a super Jordan matrix (or block) and prove the main theorem of this paper. A super Jordan matrix is the natural super-analogue of the Jordan canonical form, obtained by a change of basis with alternating parities. The main theorem is stated as follows.

\begin{theorem}[Theorem \ref{thm;theorem1}]
    Let $\mathfrak{g} = \mathfrak{gl}(m|n)$, and $\text{Lie}(G_{\bar{0}}) = \mathfrak{g}_{\bar{0}}$ for some semisimple, simply connected group $G_{\bar{0}}$. Then an odd nilpotent element $e \in \mathfrak{g}$ is contained in an $\mathfrak{osp}(1|2)$-subalgebra of $\mathfrak{g}$ if and only if $e$ lies in the $G_{\bar{0}}$-orbit of a super Jordan matrix consisting entirely of super Jordan blocks of odd size.
\end{theorem}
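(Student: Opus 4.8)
The plan is to route everything through the finite–dimensional representation theory of $\mathfrak{osp}(1|2)$. Recall that $\mathfrak{osp}(1|2)$ has a homogeneous basis $\{E,H,F\}$ (even) and $\{e,f\}$ (odd), where $(E,H,F)$ is an $\mathfrak{sl}_2$–triple, $e^2=E$, $f^2=-F$, $\{e,f\}=H$, and $[H,e]=e$, $[H,f]=-f$. Two structural facts carry most of the argument. First, every finite–dimensional $\mathfrak{osp}(1|2)$–module is completely reducible, and each irreducible has one–dimensional $H$–weight spaces filling a symmetric string of \emph{consecutive integers}, so the odd operator $e$ runs as a single Jordan chain from the lowest to the highest weight. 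Second, $\mathfrak{osp}(1|2)_{\bar0}\cong\mathfrak{sl}_2$ embeds into $G_{\bar0}$ and acts on the two–dimensional odd part by the standard representation; since $SL_2$ is transitive on nonzero vectors of $\mathbb{C}^2$, any nonzero odd element of an embedded copy of $\mathfrak{osp}(1|2)$ is $G_{\bar0}$–conjugate to the odd root vector $e$. Hence, for the $G_{\bar0}$–orbit statement, it suffices to analyze $e$ itself.

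For the forward direction, suppose $e$ lies in a subalgebra $\mathfrak{s}\cong\mathfrak{osp}(1|2)$; by the reduction above I may assume $e$ is the odd root vector of $\mathfrak{s}$, so that $V=\mathbb{C}^{m|n}$ is a finite–dimensional $\mathfrak{s}$–module with $e$ acting as the odd raising operator. Writing $V=\bigoplus_i L_i$ as a sum of irreducibles, on each $L_i$ the operator $e$ acts as a single Jordan chain through one–dimensional weight spaces. As the even part $\mathfrak{sl}_2$ preserves parity and $L_i|_{\mathfrak{sl}_2}\cong V(k)\oplus V(k-1)$ for some $k$, consecutive weight spaces carry opposite parity, so the chain is a super Jordan block of odd size $\dim L_i=2k+1$. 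The weight basis is homogeneous, so the intertwining change of basis lies in $G_{\bar0}$, and $e$ lies in the $G_{\bar0}$–orbit of a super Jordan matrix all of whose blocks have odd size.

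For the converse, suppose $e$ is $G_{\bar0}$–conjugate to a super Jordan matrix with all blocks of odd size; I construct the embedding block by block, the blocks being simultaneously $e$– and $E$–invariant. On a block of size $2k+1$, write the chain $v_0\xrightarrow{e}v_1\xrightarrow{e}\cdots\xrightarrow{e}v_{2k}\xrightarrow{e}0$ with alternating parities, take the $\mathfrak{sl}_2$–triple $(E,H,F)$ for $E=e^2$ furnished by Section~3.2, normalized so that $Hv_i=(i-k)v_i$; then $[H,e]=e$ follows at once. Defining the odd lowering operator $f:=[F,e]$ (which has $H$–weight $-1$ and is parity–reversing), I verify the remaining relations $f^2=-F$, $\{e,f\}=H$, $[H,f]=-f$, so that $\langle e,f,E,H,F\rangle$ closes to a copy of $\mathfrak{osp}(1|2)$ acting irreducibly on the block. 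Taking the direct sum of these operators over all blocks yields a single $\mathfrak{osp}(1|2)$–subalgebra of $\mathfrak{gl}(m|n)$ containing $e$.

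The main obstacle is the verification in the converse: one must fix the ladder coefficients of $F$ along the two $E$–Jordan chains concealed inside a single block (the even– and odd–indexed subchains, of lengths $k+1$ and $k$) and then track the signs generated by the super–anticommutators, confirming that $\{e,f\}$ is exactly $H$ and that $f^2$ is exactly $-F$ rather than some unrelated operator. This is precisely where the odd–size hypothesis enters: only for a chain of odd length do the weights form the symmetric integer string $-k,\dots,k$ that makes $e$ a genuine weight–$1$, parity–reversing operator fitting into a full $\mathfrak{osp}(1|2)$–action, whereas an even–size chain would demand half–integer weights that no $\mathfrak{osp}(1|2)$–module admits. Kostant's theorem may additionally be invoked to reconcile the triple of Section~3.2 with the weight normalization used here, but the essential work is the sign bookkeeping in the super relations.
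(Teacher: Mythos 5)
Your proposal is correct in substance, but it takes a genuinely different route from the paper. The paper never touches the representation theory of $\mathfrak{osp}(1|2)$: it starts from the Jenkins--Nakano matrix representatives of odd nilpotent orbits (the form (\ref{eqn;eqn4})), attaches to $E=e^2$ a completely explicit $\mathfrak{sl}_2$-triple $\{H,E,F\}$ (Proposition \ref{prop;proposition2}), reduces the $\mathfrak{osp}(1|2)$-condition to the two identities $[H,e]=e$ and $[F,f]=0$ (Lemma \ref{lem;lemma4}), checks by direct matrix computation that these hold precisely when $\{J_1,\ldots,J_t\}=\{J_{i_1},\ldots,J_{i_{r_1}}\}\sqcup\{J_{j_1},\ldots,J_{j_{r_2}}\}$ and $s=0$ (Lemmas \ref{lem;lemma6}, \ref{lem;lemma7}), and finally identifies this combinatorial condition with ``all super Jordan blocks of odd size'' (Lemma \ref{lem;lemma8}). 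You instead route through module theory: complete reducibility of finite-dimensional $\mathfrak{osp}(1|2)$-modules, the fact that each irreducible is $(2k+1)$-dimensional with one-dimensional integer weight spaces of alternating parity, and transitivity of the inner $SL_2$ on nonzero odd elements of the subalgebra. This buys two real things: conceptual brevity (no Jenkins--Nakano classification, no block-by-block matrix calculus), and a cleaner forward direction --- your argument handles an \emph{arbitrary} ambient $\mathfrak{osp}(1|2)$-subalgebra directly, whereas the paper's lemmas certify the criterion only for the particular triple constructed in Section 3.2, leaving the passage from ``some'' subalgebra to ``the constructed'' one implicit (it can be patched via Kostant's theorem or the Entova-Aizenbud--Serganova result). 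The cost is reliance on complete reducibility for $\mathfrak{osp}(1|2)$, a nontrivial classical theorem you assert without citation, and a loss of explicitness: the paper's computations output the triple and $f$ in closed form. Two small repairs to your write-up: with $f:=[F,e]$ the super Jacobi identity forces $\{e,f\}=-H$ (as in the paper's relation list), not $+H$, since $\{e,[F,e]\}=-\{e,[F,e]\}-2H$; take $f:=-[F,e]$ or adjust your target relations accordingly. And your converse verification, which you only sketch, reduces exactly to the criterion of Lemma \ref{lem;lemma4}: on a single odd block your weight normalization $Hv_i=(i-k)v_i$ gives $[H,e]=e$ immediately, and $[F,f]=0$ is a short ladder-coefficient check (e.g.\ on a size-$3$ block one finds $fv_1=v_0$, $fv_2=-v_1$, $[F,f]=0$, $f^2=-F$), so the sign bookkeeping does close as you predict.
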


\noindent To this end, we establish an equivalent necessary and sufficient condition for an odd nilpotent element $e \in \mathfrak{g}$ to be contained in an $\mathfrak{osp}(1|2)$-subalgebra.

\section{Preliminary}
\subsection{Lie superalgebra}

We review the notion of a Lie superalgebra and discuss important examples, including the general linear, special linear, and ortho-symplectic Lie superalgebras \cite{CW, Kac}. Throughout this paper, Lie superalgebras are defined over $\mathbb{C}$.

\vspace{1pt}

\begin{definition}
     A \textbf{vector superspace} $V$ is a vector space endowed with a $\mathbb{Z}_2$-gradation: $V = V_0 \oplus V_1$. The \textbf{dimension} of the vector superspace $V$ is the tuple $\dim V =(\dim V_0 | \dim V_1)$. The \textbf{parity} of a homogeneous element $a \in V_i$ is denoted by $|a| = i, \ i \in \mathbb{Z}_2$. An element in $V_0$ is called \textbf{even}, while an element in $V_1$ is called \textbf{odd}. A \textbf{subspace} of a vector superspace $V = V_0 \oplus V_1$ is a vector superspace $W = W_0 \oplus W_1$ with compatible $\mathbb{Z}_2$-gradation. The superspace with even subspace $\mathbb{C}^m$ and odd subspace $\mathbb{C}^n$ is denoted by $\mathbb{C}^{m|n}$.\\
     \indent A \textbf{superalgebra} $A$ is a vector superspace $A = A_0 \oplus A_1$ equipped with bilinear multiplication satisfying $A_iA_j \subseteq A_{i+j}$, for $i, j \in \mathbb{Z}_2$. \textbf{Subalgebras} and \textbf{ideals} of superalgebras are also understood in the $\mathbb{Z}_2$-graded sense. 
\end{definition}

\begin{definition}
    A \textbf{Lie superalgebra} is a superalgebra $\mathfrak{g} = \mathfrak{g}_{\bar{0}} \oplus \mathfrak{g}_{\bar{1}}$ with bilinear bracket operation $[\cdot, \cdot]$ satisfying the following two axioms: for homogeneous elements $a, b, c \in \mathfrak{g},$
    
    \begin{flushleft}\setlength{\leftskip}{10pt}
    (Skew-supersymmetry) \quad $[a, b] = -(-1)^{|a||b|}[b, a]$.\\
    (Super Jacobi identity) \quad $[a,[b,c]] = [[a,b],c] + (-1)^{|a||b|}[b,[a,c]]$.
    \end{flushleft}
    Let $\mathfrak{g}$ and $\mathfrak{g}'$ be Lie superalgebras. A \textbf{homomorphism} of Lie superalgebras is an even linear map $f : \mathfrak{g} \rightarrow \mathfrak{g'}$ that preserves the bracket operation.
\end{definition}

\begin{example}
    Let $V = V_{\bar{0}} \oplus V_{\bar{1}}$ be a vector superspace so that $\text{End}(V)$ is an associative superalgebra. Now, define the bracket operation as follows:
    \[ [a,b] = ab - (-1)^{|a||b|}ba, \quad \text{homogenous} \ a,b \in \text{End}(V) \]
    A Lie superalgebra $\text{End}(V)$ equipped with supercommutator $[\cdot, \cdot]$ is called the \textbf{general linear Lie superalgebra} and denoted by $\mathfrak{gl}(V)$. When $V = \mathbb{C}^{m|n}$ we write $\mathfrak{gl}(m|n)$ for $\mathfrak{gl}(V)$.\\
    \indent We may parametrize an ordered basis of $V$ by the set
    \[ I(m|n) = \{\bar{1}, \ \ldots \ , \ \bar{m} \ ; \ 1, \ \ldots \ , \ n\}. \]
    With respect to this ordered basis, $\mathfrak{gl}(V)$ can be realized as
    \begin{equation}\label{eqn;eqn1}
    \small
    \mathfrak{gl}(V) = \left\{ \begin{pmatrix}
    a & b \\
    c & d \\    
    \end{pmatrix} : a \in \text{Mat}_{m\times m}, \ b \in \text{Mat}_{m\times n}, \ c \in \text{Mat}_{n\times m}, \ d \in \text{Mat}_{n\times n} \right\}.
    \end{equation}
    In particular,
    \begin{align*}
    \mathfrak{gl}(V)_{\bar{0}} &= \left\{ \begin{pmatrix}
    a & 0 \\
    0 & d \\    
    \end{pmatrix} : a \in \text{Mat}_{m\times m}, \ d \in \text{Mat}_{n\times n} \right\}, \\
    \mathfrak{gl}(V)_{\bar{1}} &= \left\{ \begin{pmatrix}
    0 & b \\
    c & 0 \\    
    \end{pmatrix} : b \in \text{Mat}_{m \times n}, \ c \in \text{Mat}_{n \times m} \right\}.
    \end{align*}
\end{example}

\vspace{1pt}

\begin{example}
    For an element $g \in \mathfrak{gl}(m|n)$ of the form (\ref{eqn;eqn1}), we define the \textbf{supertrace} as
    \[ \text{str}(g) = \text{tr}(a) - \text{tr}(d). \]
    Then, the subspace
    \[ \mathfrak{sl}(m|n) = \{g \in \mathfrak{gl}(m|n) : \text{str}(g) = 0 \} \]
    is a subalgebra of $\mathfrak{gl}(m|n)$, and it is called the \textbf{special linear Lie superalgebra}.
\end{example}

\vspace{1pt}

\begin{example}
    Let $V = V_{\bar{0}} \oplus V_{\bar{1}}$ be a vector superspace with $\dim V = (m|n)$. Let $F$ be a non-degenerate even supersymmetric bilinear form on $V$, so that $V_{\bar{0}}$, and $V_{\bar{1}}$ are orthogonal and the restriction of $F$ to $V_{\bar{0}}$ is a symmetric and to $V_{\bar{1}}$ a skew-symmetric form (in particular, $n = 2r$ is even).\\
    \indent Now, define $\mathfrak{osp}(V) = \mathfrak{osp}(V)_{\bar{0}} \oplus \mathfrak{osp}(V)_{\bar{1}}$, where
    \[ \mathfrak{osp}(V)_s = \{ g \in \mathfrak{gl}(V)_s : F(g(x), y) = -(-1)^{s|x|}F(x, g(y)), \ \forall x,y \in V \}, \ s \in \mathbb{Z}_2.\]
    Here, $\mathfrak{osp}(V)$ is called the \textbf{ortho-symplectic Lie superalgebra}. In other words, $\mathfrak{osp}(V)$ is the subalgebra of $\mathfrak{gl}(V)$ that preserves a non-degenerate supersymmetric bilinear form. When $V = \mathbb{C}^{m|n}$ we write $\mathfrak{osp}(m|n)$ for $\mathfrak{osp}(V)$.\\
    \indent If $m = 2l+1$, a matrix in $\mathfrak{osp}(m|n)$ is of the form
    \begin{equation}\label{eqn;eqn2}
    \left(
    \begin{array}{c|c}
    \begin{matrix} a & b & u\\ c & -a^t & v \\ -v^t & -u^t & 0 \end{matrix} & \begin{matrix} x & x_1 \\ y & y_1  \\ z & z_1 \end{matrix} \\
    \hline
    \begin{matrix} y_1^t & x_1^t & z_1^t \\ -y^t & -x^t & -z^t \end{matrix} & \begin{matrix} d & e \\ f & -d^t \end{matrix}
    \end{array}
    \right) \in \text{Mat}_{(2l+2r+1)\times (2l+2r+1)},
    \end{equation}
    where $a$ is any $(l \times l)$-matrix; $b$ and $c$ are skew-symmetric $(l \times l)$-matrices; $d$ is any $(r \times r)$-matrix; $e$ and $f$ are symmetric $(r \times r)$-matrices; $u$ and $v$ are $(l \times 1)$-matrices; $x, x_1, y$ and $y_1$ are $(l \times r)$-matrices; $z$ and $z_1$ are $(1 \times r)$-matrices.\\
    \indent If $m = 2l$, a matrix in $\mathfrak{osp}(m|n)$ has the same form as in (\ref{eqn;eqn2}), with the middle row and column deleted.
\end{example}

\vspace{1pt}

\subsection{Nilpotent orbits in semisimple Lie algebra}
We introduce the Jacobson–Morozov theorem \cite{Jac, Mor} for a semisimple Lie algebra \g, which states that every nilpotent element $e \in \mathfrak{g}$ is contained in an $\mathfrak{sl}_2$-subalgebra. We then discuss Kostant’s theorem \cite{Ko}, which establishes the uniqueness of such an $\mathfrak{sl}_2$-subalgebra. Furthermore, we explicitly construct an $\mathfrak{sl}_2$-subalgebra in the case $\mathfrak{g} = \mathfrak{sl}_n$. This section follows the exposition in \cite{CMc}.

\begin{definition}
    Let \g\ be a semisimple Lie algebra. An \textbf{$\mathfrak{sl}_2$-triple} in \g\ is a triple of elements $\{h, e, f\} \subseteq  \mathfrak{g}$ satisfying the following conditions:
    \[ [e,f] = h,\; [h,e] = 2e,\; [h,f] = -2f. \]
    $h, e, f$ are called the semisimple, nilpositive and nilnegative elements of the triple, respectively.
\end{definition}
\begin{remark}
    A subalgebra generated by the $\mathfrak{sl}_2$-triple is isomorphic to $\mathfrak{sl}_2$.
\end{remark}

\vspace{1pt}

We define a bijection
\[\Omega : \{\text{orbits of $\mathfrak{sl}_2$-triples in } \mathfrak{g}\} \; \rightarrow \; \{\text{nilpotent orbits in } \mathfrak{g} \}, \]
where the surjectivity and injectivity of $\Omega$ follow from the Jacobson–Morozov theorem and Kostant’s theorem, respectively.

\begin{theorem}[Jacobson-Morozov]
    Let \g\ be a semisimple Lie algebra and $e\in \mathfrak{g}$ a non-zero nilpotent element. Then, there exists an $\mathfrak{sl}_2$-triple in \g\ whose nilpositive element is $e$.
\end{theorem}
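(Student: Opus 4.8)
The plan is to build the triple in two stages: first locate a semisimple \emph{neutral} element $h$ with $[h,e]=2e$, and then produce the nilnegative $f$. Throughout I use the Killing form $\kappa$, which is non-degenerate because $\mathfrak{g}$ is semisimple, together with its invariance $\kappa([x,a],b)=-\kappa(a,[x,b])$; the latter says every $\operatorname{ad}x$ is skew-adjoint for $\kappa$, so that $\operatorname{Im}(\operatorname{ad}e)^{\perp}=\ker(\operatorname{ad}e)=\mathfrak{z}_{\mathfrak{g}}(e)$. This duality is what lets me translate the existence of bracket solutions into orthogonality conditions that nilpotency can settle.

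For the first stage, I note that for any $z\in\mathfrak{z}_{\mathfrak{g}}(e)$ the operators $\operatorname{ad}e$ and $\operatorname{ad}z$ commute and $\operatorname{ad}e$ is nilpotent, so $\operatorname{ad}e\,\operatorname{ad}z$ is nilpotent and $\kappa(e,z)=\operatorname{tr}(\operatorname{ad}e\,\operatorname{ad}z)=0$. Hence $e\perp\ker(\operatorname{ad}e)=\operatorname{Im}(\operatorname{ad}e)^{\perp}$, which forces $e\in\operatorname{Im}(\operatorname{ad}e)$; choosing $x$ with $[e,x]=e$ and setting $h_{0}=-2x$ gives $[h_{0},e]=2e$. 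Passing to the abstract Jordan decomposition $h_{0}=s+n$ in $\mathfrak{g}$, the element $e$ is an $\operatorname{ad}h_{0}$-eigenvector for the nonzero eigenvalue $2$, so it lies in a single generalized eigenspace on which the semisimple part acts as the scalar $2$ and the nilpotent part as $0$; therefore $h:=s$ is semisimple and still satisfies $[h,e]=2e$.

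For the second stage I use that $\operatorname{ad}h$ is now diagonalizable, giving a weight decomposition $\mathfrak{g}=\bigoplus_{\lambda}\mathfrak{g}_{\lambda}$ with $e\in\mathfrak{g}_{2}$, $h\in\mathfrak{g}_{0}$, $\operatorname{ad}e:\mathfrak{g}_{\lambda}\to\mathfrak{g}_{\lambda+2}$, and $\kappa$ pairing $\mathfrak{g}_{\lambda}$ non-degenerately with $\mathfrak{g}_{-\lambda}$. Any $f\in\mathfrak{g}_{-2}$ automatically satisfies $[h,f]=-2f$, so it suffices to find $f\in\mathfrak{g}_{-2}$ with $[e,f]=h$, i.e.\ to show $h\in[e,\mathfrak{g}_{-2}]$. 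Using the self-pairing of $\mathfrak{g}_{0}$ and invariance, $[e,\mathfrak{g}_{-2}]^{\perp}\cap\mathfrak{g}_{0}=\{b\in\mathfrak{g}_{0}:[b,e]=0\}=\mathfrak{g}_{0}\cap\mathfrak{z}_{\mathfrak{g}}(e)$, so $h\in[e,\mathfrak{g}_{-2}]$ is equivalent to the orthogonality $\kappa(h,z)=0$ for every $z\in\mathfrak{g}_{0}\cap\mathfrak{z}_{\mathfrak{g}}(e)$.

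The main obstacle is exactly this last orthogonality; it reflects the positivity phenomenon that $\operatorname{ad}h$ should acquire only non-negative weights on $\ker(\operatorname{ad}e)$. Unlike the vanishing $\kappa(e,\mathfrak{z}_{\mathfrak{g}}(e))=0$ used above, here $\operatorname{ad}z$ need not commute with $\operatorname{ad}e$, so a bare trace argument fails; the resolution is a representation-theoretic bootstrap, analysing the action of the solvable algebra $\langle e,h\rangle$ (by Lie's theorem $\operatorname{ad}e$ acts nilpotently and raises the $\operatorname{ad}h$-weight by $2$) together with the non-degenerate pairing of $\mathfrak{g}_{\lambda}$ with $\mathfrak{g}_{-\lambda}$ to force the weights of $\operatorname{ad}h$ on $\mathfrak{z}_{\mathfrak{g}}(e)$ to be non-negative integers. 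Granting this, $h\in[e,\mathfrak{g}_{-2}]$ produces the desired $f$, and $\{h,e,f\}$ is an $\mathfrak{sl}_{2}$-triple with nilpositive element $e$. In the special case $\mathfrak{g}=\mathfrak{sl}_{n}$ relevant to the sequel, one can bypass this obstacle entirely by putting $e$ in Jordan form and reading off $h$ and $f$ blockwise, which simultaneously furnishes the explicit triples used later.
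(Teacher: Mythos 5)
The paper does not actually prove this theorem: it quotes it from \cite{Jac, Mor} and follows the exposition of \cite{CMc}, where the explicit triples for $\mathfrak{sl}_n$ (Proposition \ref{prop;proposition1}) are what is really used later. So your attempt must be judged against the classical argument. Up to the final step your reductions are correct and standard: non-degeneracy and invariance of $\kappa$ give $[\mathfrak{g},e]^{\perp}=\mathfrak{z}_{\mathfrak{g}}(e)$; the commuting-nilpotent trace argument gives $\kappa(e,\mathfrak{z}_{\mathfrak{g}}(e))=0$, hence $e\in[\mathfrak{g},e]$ and an $h_0$ with $[h_0,e]=2e$; the passage to the semisimple part $h=s$ is legitimate (the Jordan parts of $\operatorname{ad}h_0$ preserve the generalized $2$-eigenspace, on which $\operatorname{ad}s$ acts as the scalar $2$); and the existence of $f$ correctly reduces to $\kappa(h,z)=0$ for all $z\in\mathfrak{z}_{\mathfrak{g}}(e)\cap\mathfrak{g}_0$.

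The genuine gap is precisely the step you introduce with ``Granting this'': that orthogonality is the entire content of the theorem, and the mechanism you gesture at does not deliver it. Lie's theorem applied to the solvable algebra $\langle h,e\rangle$ only produces a common flag; it cannot rule out negative $\operatorname{ad}h$-weights on $\mathfrak{z}_{\mathfrak{g}}(e)$, and even granting non-negativity of those weights you would still not obtain $\kappa(h,z)=0$ for the weight-\emph{zero} elements $z$, which is exactly the case you need. For such $z$ one has $[h,z]=0$, so $\operatorname{ad}z$ preserves each $\mathfrak{g}_{\lambda}$ and $\kappa(h,z)=\sum_{\lambda}\lambda\,\operatorname{tr}\bigl(\operatorname{ad}z|_{\mathfrak{g}_{\lambda}}\bigr)$; this vanishes when $\operatorname{ad}z$ is nilpotent, but no trace or weight bookkeeping handles a \emph{semisimple} $z\in\mathfrak{z}_{\mathfrak{g}}(e)\cap\mathfrak{g}_0$, which is the hard case. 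The classical proofs close it in one of two ways: (a) Morozov's lemma, which assumes $h\in[e,\mathfrak{g}]$, writes $h=[e,y]$, shows by an explicit bracket computation that $-2$ is not an eigenvalue of $\operatorname{ad}h|_{\mathfrak{z}_{\mathfrak{g}}(e)}$, and then corrects $y$ by solving $(\operatorname{ad}h+2)w=-([h,y]+2y)$ inside $\mathfrak{z}_{\mathfrak{g}}(e)$ --- note that your $h=s$, the semisimple part of $-2x$, is never shown to lie in $\operatorname{Im}(\operatorname{ad}e)$, a second unpatched hole if you take this route; or (b) induction on $\dim\mathfrak{g}$: if $\mathfrak{z}_{\mathfrak{g}}(e)$ contains a nonzero semisimple element $z$, then $e$ lies in the derived algebra of the proper reductive subalgebra $\mathfrak{z}_{\mathfrak{g}}(z)$ and one inducts, while if every element of $\mathfrak{z}_{\mathfrak{g}}(e)\cap\mathfrak{g}_0$ is ad-nilpotent the trace computation above finishes. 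One of these arguments must actually appear; as written, your proof establishes only the (easy) first half of Jacobson--Morozov. Your closing remark is apt, though: for $\mathfrak{g}=\mathfrak{sl}_n$ the blockwise construction is complete in itself, and that is in fact all the paper uses.
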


\begin{theorem}[Kostant]
    Let \g\ be a semisimple algebra and $\{h,e,f\}, \{h',e,f'\}$ be two $\mathfrak{sl}_2$-triples in \g\ with the same nilpositive element. Then, there exists $z \in u^e := \mathfrak{g}^e \cap [\mathfrak{g}, e]$ \footnote{Define $\mathfrak{g}^{e} = \{x \in \mathfrak{g} \ | \ [e,x] = 0 \}$ and $[\mathfrak{g}, e] = \{[w,e] \ | \ w \in \mathfrak{g}\}.$} such that $Exp\ z(h) = h',\ Exp\ z(e) = e,\ Exp\ z(f) = f'$.
\end{theorem}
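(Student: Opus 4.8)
The plan is to exploit the $\mathbb{Z}$-grading $\mathfrak{g} = \bigoplus_{k \in \mathbb{Z}} \mathfrak{g}_k$ coming from the adjoint action of the semisimple element $h$, where $\mathfrak{g}_k = \{x : [h,x] = kx\}$, together with the representation theory of the subalgebra $\mathfrak{s} = \langle h,e,f \rangle \cong \mathfrak{sl}_2$. Decomposing $\mathfrak{g}$ into irreducible $\mathfrak{s}$-modules and reading off highest-weight vectors yields the two structural facts that drive everything: the centralizer is concentrated in nonnegative degrees, $\mathfrak{g}^e = \bigoplus_{k \ge 0} \mathfrak{g}^e_k$ with $\mathfrak{g}^e_k = \mathfrak{g}^e \cap \mathfrak{g}_k$, and $u^e = \mathfrak{g}^e \cap [\mathfrak{g},e] = \bigoplus_{k \ge 1} \mathfrak{g}^e_k$ (the highest-weight vectors of the nontrivial summands). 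In particular every $z \in u^e$ is $\mathrm{ad}$-nilpotent, so $\mathrm{Exp}\,z = \exp(\mathrm{ad}\,z)$ is a genuine automorphism, and since $[z,e] = 0$ it fixes $e$. Thus any such $\mathrm{Exp}\,z$ carries a triple with nilpositive element $e$ to another triple with the same $e$.

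First I would record a rigidity statement: the nilnegative element of a triple is determined by its semisimple and nilpositive elements. Indeed, if $\{h,e,f\}$ and $\{h,e,f''\}$ are both triples, then $f - f'' \in \mathfrak{g}^e \cap \mathfrak{g}_{-2} = \mathfrak{g}^e_{-2} = 0$, so $f = f''$. Consequently it suffices to produce $z \in u^e$ with $\mathrm{Exp}\,z(h) = h'$: such a $z$ carries $\{h,e,f\}$ to a triple $\{h',e,f''\}$, and the rigidity statement then forces $f'' = f'$, giving the full conclusion.

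Next I would show that $t := h' - h$ already lies in $u^e$. Since $[t,e] = 0$ we have $t \in \mathfrak{g}^e$, so its negative-degree components vanish automatically; the content is that its degree-zero component $c_0 \in \mathfrak{g}^e_0$ vanishes. Writing $h = [e,f]$ and $h' = [e,f']$ and comparing degree-zero parts gives $c_0 = [e,\,(f')_{-2} - f]$, where $(f')_{-2}$ is the degree $-2$ component of $f'$; hence $c_0 \in \mathfrak{g}^e \cap [\mathfrak{g},e] = u^e$, which has no degree-zero part, forcing $c_0 = 0$ and $t \in u^e$.

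Finally I would solve $\mathrm{Exp}\,z(h) = h'$ by graded successive approximation. Consider $F : u^e \to u^e$, $F(z) = \mathrm{Exp}\,z(h) - h$; one checks as above (using $[e, \mathrm{Exp}\,z(h) - h] = 0$) that $F$ indeed lands in $u^e$, and the equation to solve is $F(z) = t$. The linearization of $F$ at $0$ is $z \mapsto -\,\mathrm{ad}(h)\,z$, which acts on $\mathfrak{g}^e_k$ as multiplication by $-k$ and is therefore invertible on $u^e = \bigoplus_{k \ge 1} \mathfrak{g}^e_k$. Because the grading is bounded, the nonlinear corrections only raise degree, so one solves for $z$ degree by degree starting from the lowest, and the process terminates after finitely many steps; this yields the desired $z \in u^e$ with $\mathrm{Exp}\,z(h) = h'$, whence $\mathrm{Exp}\,z(e) = e$ and $\mathrm{Exp}\,z(f) = f'$ by rigidity. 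I expect the main obstacle to be exactly the vanishing of the degree-zero (reductive) component $c_0$ — that is, ruling out genuine deformations of $h$ inside the reductive part $\mathfrak{g}^e_0 = \mathfrak{g}^{\mathfrak{s}}$ of the centralizer — since the remaining unipotent reduction is a formal consequence of the invertibility of $\mathrm{ad}(h)$ on $u^e$.
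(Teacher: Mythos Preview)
The paper does not give its own proof of Kostant's theorem: it is stated in the preliminaries (Section~2.2) with a citation to \cite{Ko} and the exposition is attributed to \cite{CMc}, so there is no argument in the paper to compare against. Your sketch is the standard proof one finds in \cite{CMc}: use the $\mathrm{ad}(h)$-grading and $\mathfrak{sl}_2$-representation theory to see that $u^e=\bigoplus_{k\ge 1}\mathfrak{g}^e_k$ consists of $\mathrm{ad}$-nilpotent elements fixing $e$, observe that $h'-h=[e,f'-f]\in u^e$, and then solve $\exp(\mathrm{ad}\,z)(h)=h'$ degree by degree using the invertibility of $\mathrm{ad}(h)$ on $u^e$; the rigidity step then pins down $f'$. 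This is correct. One minor simplification: your treatment of the degree-zero component $c_0$ is more elaborate than necessary, since $t=h'-h=[e,f'-f]$ already exhibits $t\in[\mathfrak{g},e]$, so $t\in\mathfrak{g}^e\cap[\mathfrak{g},e]=u^e$ immediately.
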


We now construct an explicit $\mathfrak{sl}_2$-triple containing a given nilpotent element in $\mathfrak{sl}_n$. Recall that nilpotent elements in $\mathfrak{sl}_n$ can be expressed as follows. 

\begin{theorem}
    Let $\mathfrak{g} = \mathfrak{sl}_n$. There is a one-to-one correspondence between the set of nilpotent orbits in $\mathfrak{g}$ and the set of partitions of $n$. Precisely, let $J_i \in \text{Mat}_{i\times i}$ be a Jordan block of the form
\[J_i = \begin{pmatrix}
0 & 1 & \cdots & 0 & 0\\
0 & 0 & \cdots & 0 & 0\\
\vdots & \vdots & \ddots & \vdots & \vdots\\
0 & 0 & \ldots & 0 & 1\\
0 & 0 & \ldots & 0 & 0\\
\end{pmatrix}.  \]
Then, any nilpotent $E \in \mathfrak{sl}_n$ is similar to a Jordan canonical form $E_{[d_1, d_2 \ldots, d_k]}$ as below.\\
\begin{equation}\label{eqn;eqn3}
    E_{[d_1, d_2 \ldots, d_k]} = \begin{pmatrix}
    J_{d_1} & 0 & \cdots & 0\\
    0 & J_{d_2} & \cdots & 0\\
    \vdots & \vdots & \ddots & \vdots\\
    0 & 0 & \ldots & J_{d_k}\\
\end{pmatrix},
\end{equation}
where $[d_1, d_2, \ldots, d_k]$ is a partition of $n$.
\end{theorem}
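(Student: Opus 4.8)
The plan is to reduce the statement to the Jordan normal form theorem from linear algebra and then reinterpret it orbit-theoretically. First I would note that the nilpotent orbits in $\mathfrak{sl}_n$ are the orbits under the adjoint action of $SL_n$, which on matrices is ordinary conjugation $E \mapsto gEg^{-1}$. Since a nilpotent matrix has all eigenvalues equal to $0$, its trace vanishes and it automatically lies in $\mathfrak{sl}_n$; conjugation preserves both nilpotency and trace, so these orbits are genuinely contained in $\mathfrak{sl}_n$.

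The substantive input is the Jordan normal form theorem: every nilpotent $E \in \text{Mat}_{n \times n}$ is conjugate, by some $g \in GL_n$, to a block-diagonal matrix $E_{[d_1, \ldots, d_k]}$ as in (\ref{eqn;eqn3}) whose blocks are the standard nilpotent Jordan blocks $J_{d_i}$, and the multiset of sizes is uniquely determined. To make the assignment of a partition to an orbit manifestly well-defined, I would recover the sizes intrinsically from the rank data $r_j := \text{rank}(E^j)$: the number of Jordan blocks of size at least $j$ equals $\dim\ker(E^j) - \dim\ker(E^{j-1})$, and since rank is a conjugation invariant, the resulting partition depends only on the conjugacy class. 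After reordering so that $d_1 \geq \cdots \geq d_k$ and using $\sum_i d_i = n$, this yields a well-defined map from nilpotent orbits to partitions of $n$.

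Next I would check that this map is a bijection. Surjectivity is immediate, since for any partition $[d_1, \ldots, d_k]$ of $n$ the explicit matrix $E_{[d_1, \ldots, d_k]}$ is nilpotent with vanishing trace, hence lies in $\mathfrak{sl}_n$ and maps to the given partition. Injectivity is exactly the uniqueness half of Jordan normal form: two nilpotent matrices sharing the same partition are both conjugate to the same representative $E_{[d_1, \ldots, d_k]}$, hence conjugate to each other, so they lie in a single orbit.

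The one point requiring care is that Jordan normal form supplies a conjugating element of $GL_n$, while the orbits are taken under $SL_n$. I would resolve this by the observation that the two orbit notions coincide for nilpotent elements: if $E' = gEg^{-1}$ with $g \in GL_n$, then picking $\lambda \in \mathbb{C}^\times$ with $\lambda^n = \det(g)^{-1}$ and replacing $g$ by $\lambda g \in SL_n$ leaves the conjugation unchanged, as scalar matrices are central. Thus the bijection descends to $SL_n$-orbits. I do not expect a genuine obstacle here: the only nontrivial ingredient is the existence-and-uniqueness of Jordan form, which I would cite as standard rather than reprove, and the remaining work is the bookkeeping above.
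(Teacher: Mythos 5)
Your proposal is correct: the paper states this theorem without proof, treating it as a standard fact recalled from \cite{CMc}, and your argument is precisely the standard one it has in mind---existence and uniqueness of the Jordan normal form, with the partition made manifestly orbit-invariant via the kernel dimensions $\dim\ker(E^j) - \dim\ker(E^{j-1})$. Your extra care on the $GL_n$-versus-$SL_n$ orbit question, rescaling the conjugating matrix by $\lambda$ with $\lambda^n = \det(g)^{-1}$ (which exists over $\mathbb{C}$, and is harmless since scalars are central), is exactly the right way to see that the two orbit notions coincide in type $A$, a point the paper leaves implicit.
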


\begin{proposition}\label{prop;proposition1}
    Given a nilpotent element $E_{[d_1, d_2 \ldots, d_k]} \in \mathfrak{sl}_n$ of the form (\ref{eqn;eqn3}), the semisimple element $H_{[d_1, d_2 \ldots, d_k]}$ and the nilnegative element $F_{[d_1, d_2 \ldots, d_k]}$ described below form the associated $\mathfrak{sl}_2$-triple.
    \[
    H_{[d_1, d_2 \ldots, d_k]} = \begin{pmatrix}
    D_{d_1} & 0 & \cdots & 0\\
    0 & D_{d_2} & \cdots & 0\\
    \vdots & \vdots & \ddots & \vdots\\
    0 & 0 & \ldots & D_{d_k}\\
\end{pmatrix}, \
    F_{[d_1, d_2 \ldots, d_k]} = \begin{pmatrix}
    \tilde{J}_{d_1} & 0 & \cdots & 0\\
    0 & \tilde{J}_{d_2} & \cdots & 0\\
    \vdots & \vdots & \ddots & \vdots\\
    0 & 0 & \ldots & \tilde{J}_{d_k}\\
\end{pmatrix},
    \]
    where each $D_{d_i}$ and $\tilde{J}_{d_i}$ are defined as follows for $i = 1, \ldots, k$:
    \[
    D_{d_i} = \begin{pmatrix}
    d_i - 1 & 0 & 0 & \cdots & 0 & 0\\
    0 & d_i - 3 & 0 & \cdots & 0 & 0\\
    0 & 0 & d_i - 5 & \cdots & 0 & 0\\
    \vdots & \vdots & \vdots & \ddots & \vdots & \vdots\\
    0 & 0 & 0 & \ldots & -d_i + 3 & 0\\
    0 & 0 & 0 & \ldots & 0 & -d_i + 1\\
\end{pmatrix},
    \]
    \[
    \tilde{J}_{d_i} = \begin{pmatrix}
    0 & 0 & 0 & \cdots & 0 & 0\\
    b_1^i & 0 & 0 & \cdots & 0 & 0\\
    0 & b_2^i & 0 & \cdots & 0 & 0\\
    \vdots & \vdots & \vdots & \ddots & \vdots & \vdots\\
    0 & 0 & 0 & \ldots & 0 & 0\\
    0 & 0 & 0 & \ldots & b_{d_i-1}^i & 0\\
\end{pmatrix},
    \]
    where each $b_j^i$ is defined inductively as $b_1^i = d_i - 1$ and $b_j^i - b_{j-1}^i = (D_{d_i})_{jj}$, $j = 1, \ldots, d_i - 1$.
\end{proposition}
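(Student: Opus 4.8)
The plan is to exploit the simultaneous block-diagonal structure of the three matrices. Since $E_{[d_1,\ldots,d_k]}$, $H_{[d_1,\ldots,d_k]}$, and $F_{[d_1,\ldots,d_k]}$ are block-diagonal with blocks indexed by the parts $d_1,\ldots,d_k$, and the bracket of block-diagonal matrices is computed blockwise, it suffices to verify the three defining relations of an $\mathfrak{sl}_2$-triple within a single block, i.e. for the triple $(D_d, J_d, \tilde{J}_d)$ of size $d = d_i$. Throughout I would write $E = J_d$ (nonzero entries $E_{i,i+1}=1$), $H = D_d$ (diagonal entries $H_{ii} = d-2i+1$), and $F = \tilde{J}_d$ (nonzero entries $F_{i+1,i} = b_i$), suppressing the superscript $i$.

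First I would dispatch the two easy relations $[H,E]=2E$ and $[H,F]=-2F$. Because $H$ is diagonal, bracketing it against a matrix supported on a single off-diagonal band merely rescales each band entry by the corresponding difference of diagonal entries of $H$. For the superdiagonal entry $(i,i+1)$ this difference is $H_{ii}-H_{i+1,i+1} = (d-2i+1)-(d-2i-1)=2$, giving $[H,E]=2E$; for the subdiagonal entry $(i+1,i)$ it is $H_{i+1,i+1}-H_{ii} = -2$, giving $[H,F]=-2F$. These are immediate entrywise computations that do not involve the precise values of the $b_i$.

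The substantive relation is $[E,F]=H$. Since $E$ is supported on the superdiagonal and $F$ on the subdiagonal, both products $EF$ and $FE$ are diagonal, so $[E,F]$ is diagonal and can be compared with $H$ entry by entry. A direct computation gives $(EF)_{ii}=b_i$ and $(FE)_{ii}=b_{i-1}$ under the convention $b_0 = b_d = 0$, so the $i$-th diagonal entry of $[E,F]$ is $b_i - b_{i-1}$. For $i=1,\ldots,d-1$ this equals $H_{ii}$ by the defining recurrence $b_i - b_{i-1} = (D_d)_{ii}$ together with the base case $b_1 = d-1 = H_{11}$ (equivalently $b_0 = 0$).

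The only point requiring care, and the main obstacle, is the final diagonal entry $i=d$, where $[E,F]_{dd} = -b_{d-1}$ must match $H_{dd} = -(d-1)$; this equality is not part of the recurrence and needs $b_{d-1}=d-1$. I would settle it in either of two ways: (i) solve the recurrence explicitly to obtain the closed form $b_j = j(d-j)$, whence $b_{d-1}=d-1$ and moreover the symmetry $b_j = b_{d-j}$ becomes transparent; or, more cleanly, (ii) observe that $\mathrm{tr}([E,F])=0=\mathrm{tr}(H)$, so once the first $d-1$ diagonal entries agree the remaining one is forced automatically. Either argument closes the single-block case, and reassembling the blocks yields the claim; I would conclude by noting that $H$ is traceless within each block while $E$ and $F$ are nilpotent, so all three elements indeed lie in $\mathfrak{sl}_n$.
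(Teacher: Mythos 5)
Your proposal is correct and follows essentially the same route as the paper: both reduce to a single Jordan block and verify the three bracket relations by direct entrywise computation, with $[E,F]$ diagonal and matching $D_{d_i}$ via the recurrence $b_j - b_{j-1} = (D_{d_i})_{jj}$. Your treatment is in fact slightly more careful than the paper's at the one delicate point, the last diagonal entry $[E,F]_{dd} = -b_{d-1} = -(d-1)$, which the paper simply displays without comment but which you justify cleanly via the closed form $b_j = j(d-j)$ or the trace argument $\operatorname{tr}([E,F]) = 0 = \operatorname{tr}(H)$.
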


\begin{proof}
    We simply write $E = E_{[d_1, d_2 \ldots, d_k]}$, $F = F_{[d_1, d_2 \ldots, d_k]}$ and $H = H_{[d_1, d_2 \ldots, d_k]}$.
    To verify that these elements form an $\mathfrak{sl}_2$-triple, it suffices to show that 
    \[ [E,F] = H, \quad [H,E] = 2E, \quad [H,F] = -2F. \]
    First, we have $[H,E] = 2E$ by the result of \cite{CMc}. In addition,
    \begin{equation*}
        [E,F] = \begin{pmatrix}
    J_{d_1}\tilde{J}_{d_1} - \tilde{J}_{d_1}J_{d_1} & 0 & \cdots & 0\\
    0 & J_{d_2}\tilde{J}_{d_2} - \tilde{J}_{d_2}J_{d_2} & \cdots & 0\\
    \vdots & \vdots & \ddots & \vdots\\
    0 & 0 & \ldots & J_{d_k}\tilde{J}_{d_k} - \tilde{J}_{d_k}J_{d_k}\\
\end{pmatrix}.
    \end{equation*}
    Since each block satisfies
    \begin{equation*}
        J_{d_i}\tilde{J}_{d_i} - \tilde{J}_{d_i}J_{d_i} = \begin{pmatrix}
d_i - 1 & 0 & \cdots & 0 & 0\\
0 & (D_{d_i})_{22} & \cdots & 0 & 0\\
\vdots & \vdots & \ddots & \vdots & \vdots\\
0 & 0 & \ldots & (D_{d_i})_{(d_i-1)(d_i-1)} & 0\\
0 & 0 & \ldots & 0 & -d_i + 1\\
\end{pmatrix} = D_{d_i},
    \end{equation*}
    we obtain $[E,F] = H$. Similarly,
    \begin{equation*}
        [H,F] = \begin{pmatrix}
    D_{d_1}\tilde{J}_{d_1} - \tilde{J}_{d_1}D_{d_1} & 0 & \cdots & 0\\
    0 & D_{d_2}\tilde{J}_{d_2} - \tilde{J}_{d_2}D_{d_2} & \cdots & 0\\
    \vdots & \vdots & \ddots & \vdots\\
    0 & 0 & \ldots & D_{d_k}\tilde{J}_{d_k} - \tilde{J}_{d_k}D_{d_k}\\
\end{pmatrix}.
    \end{equation*}
    Since each block satisfies
    \begin{equation*}
        D_{d_i}\tilde{J}_{d_i} - \tilde{J}_{d_i}D_{d_i}
        = -2\tilde{J}_{d_i},
    \end{equation*}
    it follows that $[H,F] = -2F$. This completes the verification that $\{H, E, F\}$ forms an $\mathfrak{sl}_2$-triple.
\end{proof}

\begin{example}
    Let $e = \begin{pmatrix}
        0 & 1 & 0\\
        0 & 0 & 1\\
        0 & 0 & 0
    \end{pmatrix}$ be a nilpotent element in $\mathfrak{sl}_3$. Then, 
    \[
    \left\{
    e = \begin{pmatrix}
        0 & 1 & 0\\
        0 & 0 & 1\\
        0 & 0 & 0
    \end{pmatrix}, \ 
    h = \begin{pmatrix}
        2 & 0 & 0\\
        0 & 0 & 0\\
        0 & 0 & -2
    \end{pmatrix}, \ 
    f = \begin{pmatrix}
        0 & 0 & 0\\
        2 & 0 & 0\\
        0 & 2 & 0
    \end{pmatrix}
    \right\}
    \]
    forms an $\mathfrak{sl}_2$-triple containing $e$.
\end{example}

\vspace{1pt}

\section{Odd nilpotent orbits in $\mathfrak{gl}(m|n)$}

In this section, we present the classification of odd nilpotent orbits in $\mathfrak{gl}(m|n)$, following the work of L.A. Jenkins and D.K. Nakano \cite{JN}. For an odd nilpotent element $e \in \mathfrak{gl}(m|n)$, we define an even nilpotent element $E = e^2$ and construct an associated $\mathfrak{sl}_2$-triple $\{E, F, H\}$ using Proposition \ref{prop;proposition1}.

\vspace{1pt}

\subsection{Matrix realization of odd nilpotent orbit representatives in $\mathfrak{gl}(m|n)$}

Let $\mathfrak{g} = \mathfrak{g}_{\bar{0}} \oplus \mathfrak{g}_{\bar{1}} =  \mathfrak{gl}(m|n)$, and let $G_{\bar{0}}$ be the corresponding semisimple simply connected group, such that $\text{Lie}(G_{\bar{0}}) = \mathfrak{g}_{\bar{0}}$. The space $\mathfrak{g}_{\bar{1}}$ is regarded as a $G_{\bar{0}}$-module via the adjoint action. For simplicity, blank entries in matrices are understood to be zeros.

\begin{theorem}[Jenkins, Nakano \cite{JN}]
    Let $\mathfrak{g} = \mathfrak{gl}(m|n)$. The complete set of $G_{\bar{0}}$-orbit representatives of odd nilpotent elements in $\mathfrak{g}$ is given by matrices
    \begin{equation}\label{eqn;eqn4}
    e = 
    \left(
    \begin{array}{c|c}
    0 & e^+ \\
    \hline
    e^- & 0
    \end{array}
    \right),
    \end{equation}
    where
    \[
    e^+ = 
    \left(
    \begin{array}{c|c}
    I_r & 0 \\
    \hline
    0 & 0
    \end{array}
    \right)
    \quad
    \text{and}
    \quad
    e^- = 
    \left(
    \begin{array}{ccc|ccc}
    J_1 & & & | & 0 & 0\\
     & \ddots & & C_{r_1} & 0 & 0\\
     & & J_t & | & 0 & 0\\
    \hline
    - & R_{r_2} & - & 0 & 0 & 0\\
    0 & 0 & 0 & 0 & I_s & 0\\
    0 & 0 & 0 & 0 & 0 & 0
    \end{array}
    \right).
    \]
    Here, $I_r$ (resp. $I_s$) is a $r \times r$ (resp. $s \times s$) identity matrix, $J_1, \ldots, J_t$ are Jordan blocks with zero eigenvalues, where the Jordan block $J_i$ is of size $k_i \times  k_i$, $i = 1, 2, \ldots, t$ with $k_1 \geq k_2 \geq \cdots \geq k_t$ with $r = \sum_{i = 1}^t k_i$. Furthermore, the matrix $C_{r_1}$ (resp. $R_{r_2}$) are of size $r \times r_1$ (resp. $r_2 \times r$) and of the form
    \[
    C_{r_1} = (e_{i_1} \ e_{i_2} \ \cdots \ e_{i_{r_1}}), \quad R_{r_2} = (e_{j_1} \ e_{j_2} \ \cdots \ e_{j_{r_2}})^t,
    \]
    where $e_k$ is the column vector with a single $1$ in the $k$-th row and zeros elsewhere. Each index $i_p$ (resp. $j_q$) belongs to the set $\{k_1, k_1 + k_2, \ldots, k_1 + k_2 + \cdots + k_t \}$ (resp. $\{1, 1 + k_1, \ldots, 1 + k_1 + \cdots + k_{t-1} \}$).
\end{theorem}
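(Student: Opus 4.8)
The plan is to reformulate the statement in terms of the two off-diagonal blocks $e^+$ and $e^-$ of (\ref{eqn;eqn4}) and then reduce it to a graded analogue of the Jordan canonical form. Conjugation by an even element $\mathrm{diag}(g_1,g_2)\in G_{\bar 0}$ sends $e^+\mapsto g_1 e^+ g_2^{-1}$ and $e^-\mapsto g_2 e^- g_1^{-1}$, so classifying $G_{\bar 0}$-orbits of odd nilpotent elements amounts to classifying pairs $(e^+,e^-)$, with $e^+\in\mathrm{Mat}_{m\times n}$ and $e^-\in\mathrm{Mat}_{n\times m}$, up to simultaneous change of basis in $V=\mathbb{C}^m$ and $W=\mathbb{C}^n$. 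Because $e^2=\mathrm{diag}(e^+e^-,\,e^-e^+)$, nilpotency of $e$ is equivalent to nilpotency of $e^+e^-$ (equivalently of $e^-e^+$, since the two matrices share their nonzero spectrum). Equivalently, I regard $e$ as an odd nilpotent endomorphism of the $\mathbb{Z}_2$-graded space $U=V\oplus W$ and classify it up to even automorphisms: this is exactly the theory of graded nilpotent Jordan chains, i.e. of nilpotent representations of the two-vertex cyclic quiver with arrows $e^-\colon V\to W$ and $e^+\colon W\to V$.

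I would carry this out in two steps. First I normalize $e^+$: using the $GL(V)\times GL(W)$-action I bring it to $\left(\begin{smallmatrix}I_r&0\\0&0\end{smallmatrix}\right)$, where $r=\mathrm{rank}(e^+)$, which identifies the first $r$ coordinates of $W$ with the first $r$ coordinates of $V$ and kills the rest of $W$. This is the block $e^+$ displayed in the statement. The residual freedom is the stabilizer of this normal form, a parabolic-type subgroup $P\subset GL(V)\times GL(W)$ that couples the two distinguished $r$-dimensional subspaces; the second step is to classify $e^-$ under $P$ subject to the nilpotency constraint. Here the condition that $e^+e^-$ and $e^-e^+$ be nilpotent becomes an explicit condition on the blocks of $e^-$ relative to the splitting induced by $e^+$.

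To produce the canonical form for $e^-$, I decompose $U$ into indecomposable graded $e$-chains. A homogeneous generator $u$ gives a chain $u,eu,e^2u,\dots$ that alternates between $V$ and $W$, and a graded Jordan argument — induction on the nilpotency index, splitting off a chain through the top of the $e$-filtration and choosing homogeneous complements at each stage — shows that $U$ is a direct sum of such chains, each one determined by its length and the parity of its generator. Organizing these chains in a compatible ordered basis gives the displayed $e^-$: the chains passing through the $e^+$-coupling yield the Jordan blocks $J_1,\dots,J_t$; the connector matrices $C_{r_1}$ and $R_{r_2}$ record where the endpoints of these chains meet the coupled subspace, which forces the attaching indices to sit at the block-ends and hence to lie in the sets $\{k_1,\,k_1+k_2,\,\dots\}$ and $\{1,\,1+k_1,\,\dots\}$; the block $I_s$ accounts for the chains supported on the $e^+$-kernel directions; and the zero rows and columns correspond to length-one chains, i.e. homogeneous vectors annihilated by both maps. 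Completeness then follows because the reduction is constructive, and irredundancy follows by checking that the graded data — the Jordan type of $e$ together with the parity-refined dimensions of the relevant kernels and images — separate the listed representatives. I expect the main obstacle to be precisely this last matching step: reconciling the abstract chain decomposition with the explicit staircase-plus-connector shape, pinning down the index sets for $C_{r_1}$ and $R_{r_2}$, and verifying that the resulting list is an honest transversal with no repetitions.
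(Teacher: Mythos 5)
This statement is quoted by the paper from Jenkins--Nakano \cite{JN} and is nowhere proved in the paper, so there is no internal proof to compare your attempt against; I can only assess your proposal on its own terms and against the way the paper uses the result. Your route --- identify an odd $e$ with the pair $(e^+,e^-)$, note that $G_{\bar 0}$-conjugation is the action $(e^+,e^-)\mapsto(g_1e^+g_2^{-1},\,g_2e^-g_1^{-1})$, observe $e^2=\mathrm{diag}(e^+e^-,e^-e^+)$, and classify by decomposing $V\oplus W$ into alternating $e$-chains (nilpotent representations of the two-vertex cyclic quiver) --- is the standard and correct strategy for this classification. It is also exactly the chain structure that the paper itself exploits, in the reverse direction, in the proof of Lemma \ref{lem;lemma8}: there the representatives \eqref{eqn;eqn4} are decomposed into chains of lengths $2k_{i_p}+1$, $2k_{j_q}+1$, $2k_{\alpha}$, $2k_{i_{\beta_1}}+2$, $2$, and $1$, according to whether a block end (resp.\ start) carries a $C_{r_1}$- (resp.\ $R_{r_2}$-) connector, so the ``matching step'' you flag as the main obstacle is precisely the content of that lemma's computations run backwards. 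The graded Jordan argument you sketch does go through: since $e$ is homogeneous, $\ker e^k$ and $\mathrm{im}\,e^k$ are graded subspaces, homogeneous complements exist, and the multiset of chain types (length, parity of generator) is determined by the parity-refined ranks $\mathrm{rank}(e^k|_{V_{\bar 0}})$ and $\mathrm{rank}(e^k|_{V_{\bar 1}})$, which yields both existence and uniqueness of the decomposition.

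Two fine points you should tighten before calling this a proof. First, irredundancy of the displayed list holds only up to permuting blocks of equal size: if $k_i=k_{i'}$, attaching a column of $C_{r_1}$ to the end of $J_i$ rather than $J_{i'}$ produces conjugate elements, so the honest complete invariant is the multiset of chain types rather than the index sets themselves; relatedly, you must argue that repeated attachment indices can be eliminated (if $i_p=i_{p'}$, replace the generator $e_{r+p}$ by $e_{r+p}-e_{r+p'}$, which splits off a length-one chain), which is why the $i_p$ (resp.\ $j_q$) may be taken distinct. Second, the translation from an abstract chain basis back to the standard homogeneous ordering (all even vectors first, then all odd) requires checking that all chains can be simultaneously arranged so that $e^+$ is exactly $\mathrm{diag}(I_r,0)$ while $e^-$ acquires the staircase-plus-connector shape with attachments forced to block ends; this is routine but nontrivial bookkeeping, and it is precisely where the index sets $\{k_1,\,k_1+k_2,\,\ldots\}$ and $\{1,\,1+k_1,\,\ldots\}$ come from. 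Your sketch correctly identifies this as the remaining labor, so I regard the proposal as a correct outline of the standard argument rather than a complete proof.
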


\vspace{1pt}

\subsection{Construction of $\mathfrak{sl}_2$-triple of $E$}
Now, we define $E = e^2$. Assume that an odd nilpotent $e$ is of the form (\ref{eqn;eqn4}). Then, $E$ is of the form

\begin{equation}\label{eqn;eqn5}
E=
\left(
    \begin{array}{c|c}
    E_1 &  \\
    \hline
     & E_2
    \end{array}
\right)
    =
\left(
\begin{array}{c|c}
    \begin{array}{ccc|cc}
        J_1 & & & | &\\
        & \ddots & & C_{r_1} & 0\\
        & & J_t & | &\\
        \hline
        & & & &\\
    \end{array} &\\
    \hline
     & \begin{array}{ccc|cc}
       J_1 & & & &\\
       & \ddots & & &\\
       & & J_t & &\\
       \hline
       - & R_{r_{2}} & - & &\\
       & 0 & & &
    \end{array}
\end{array}
\right).
\end{equation}
Note that $E_1$ and $E_2$ are nilpotent elements of $\mathfrak{gl}_m$ and $\mathfrak{gl}_n$, respectively. By the Jacobson-Morozov theorem, there exist associated $\mathfrak{sl}_2$-triples $\{H_1, E_1, F_1\}$ and $\{H_2, E_2, F_2\}$. Let
\[
H = \left(
    \begin{array}{c|c}
    H_1 & \\
    \hline
     & H_2
    \end{array}
\right), \quad
F = \left(
    \begin{array}{c|c}
    F_1 & \\
    \hline
     & F_2
    \end{array}
\right).
\]
Then, $\{H, E, F\}$ is the associated standard triple containing $E$.

\begin{lemma}\label{lem;lemma2}
    Given a nilpotent element $E_1 \in \mathfrak{sl_m}$ of the form (\ref{eqn;eqn5}), the semisimple element $H_1$ and the nilnegative element $F_1$ described below form the associated $\mathfrak{sl}_2$-triple:
    \begin{align}
    H_1 &= 
    \left(
    \begin{array}{ccc|cccc}
       D_{k_1} & & & & & & \\
        & \ddots & & & & &\\
        & & D_{k_t} & & & &\\
        \hline
        & & & d_1 & & &\\
        & & & & \ddots & &\\
        & & & & & d_{r_1} &\\
        & & & & & & 0\\
    \end{array}
    \right) + \left(
    \begin{array}{ccc|cc}
       I_{k_{i_1}} & & & &\\
        & \ddots & & &\\
        & & I_{k_{i_{r_1}}} & &\\
        \hline
        & & & I_{r_1} &\\
        & & & & 0\\
    \end{array}
    \right), \label{eqn;eqn6}\\
    F_1 &= \left( \begin{array}{ccc|c}
       \tilde{J_1} & & &\quad\\
       & \ddots & &\\
       & & \tilde{J_t} &\\
       \hline
       - & \tilde{C}_{r_{1}} & - &\\
       & 0 & &
    \end{array}
    \right), \nonumber
    \end{align}
    where
    \begin{enumerate}[label = $\bullet$, leftmargin = 0pt]
        \item for each $i = 1, \ldots, t$, recall that $D_{k_i}$ is the diagonal matrix of size $k_i \times k_i$ defined in Proposition \ref{prop;proposition1}.
        \item for each $p = 1, \ldots, r_1$, let 
        \[d_p = - k_{i_p} - 1,\] 
        where $k_{i_p}$ denotes the $k_i$ with the largest index among those appearing in the summation defining $i_p$.
        \item for each $i = 1, \ldots, t$, the block $J_i$ of $E_1$ is replaced by the identity matrix $I_{k_i}$ if $k_1 + \cdots  + k_i \in \{i_1, \ldots , i_{r_1}\}$; in this case, we denote it by $I_{k_{i_p}}$. Otherwise, $J_i$ is replaced by the zero matrix.
        \item \[
    \tilde{J_i} = \begin{pmatrix}
    0 & & & & &\\
    a_1^i & 0 & & & &\\
    & a_2^i & 0 & & &\\
    & & & \ddots & &\\
    & & & & 0 &\\
    & & & & a_{k_i-1}^i & 0\\
\end{pmatrix},
    \]
    where each $a_{\alpha}^i$ is defined inductively as
    \[a_1^i =
    \begin{cases}
        k_i, & \text{if} \ k_1 + \cdots  + k_i \in \{i_1, \ldots , i_{r_1}\}\\
        k_i-1, & \text{otherwise}
    \end{cases}\]
    and
    \[a_{\alpha}^i - a_{\alpha-1}^i =
    \begin{cases}
        (D_{k_i})_{\alpha\alpha} +1, & \text{if} \ k_1 + \cdots  + k_i \in \{i_1, \ldots , i_{r_1}\}\\
        (D_{k_i})_{\alpha\alpha}, & \text{otherwise}
    \end{cases}\]
    for each $\alpha = 2, \ldots, k_i - 1$.
    \item $\tilde{C}_{r_1} = (\tilde{e}_{i_1} \cdots \tilde{e}_{i_{r_1}})^t$, where $\tilde{e}_{i_p} = -(d_p + 1)e_{i_p} = k_{i_p}e_{i_p}$ for each $p = 1, \ldots, r_1$.
    \end{enumerate}
\end{lemma}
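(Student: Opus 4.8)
The plan is to verify directly the three defining relations of an $\mathfrak{sl}_2$-triple, namely $[H_1,E_1]=2E_1$, $[H_1,F_1]=-2F_1$, and $[E_1,F_1]=H_1$, exploiting that $H_1$ is a diagonal matrix. Indeed, every summand of $H_1$ in (\ref{eqn;eqn6}) is diagonal, so for any matrix $X$ one has $[H_1,X]_{ab}=\big((H_1)_{aa}-(H_1)_{bb}\big)X_{ab}$. Consequently the first two relations reduce to a bookkeeping check: it suffices to show that $(H_1)_{aa}-(H_1)_{bb}=2$ at every position $(a,b)$ where $E_1$ is nonzero, and equals $-2$ at every position where $F_1$ is nonzero. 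The nonzero entries of $E_1$ sit on the superdiagonals of the blocks $J_i$ and in the columns of $C_{r_1}$, while those of $F_1$ sit on the subdiagonals of the $\tilde J_i$ and in $\tilde C_{r_1}$; so this step is a finite list of comparisons of diagonal entries of $H_1$.

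To organize these comparisons, I would decompose $\mathbb{C}^m$ into the $E_1$-cyclic chains generated by the standard basis. A Jordan block $J_i$ whose terminal index $k_1+\cdots+k_i$ does not occur among $\{i_1,\ldots,i_{r_1}\}$ yields an ordinary chain of length $k_i$, on which $H_1$ restricts to $D_{k_i}$ and $E_1,F_1$ restrict to $J_i,\tilde J_i$; here the three relations are precisely Proposition~\ref{prop;proposition1}. A block whose terminal index equals some $i_p$ is instead lengthened: the column $e_{i_p}$ of $C_{r_1}$ sends the extra basis vector $v_{r+p}$ onto the cyclic generator $v_{i_p}$ of $J_i$, producing a single chain of length $k_i+1$. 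The point is that the two summands of $H_1$ are engineered so that on this extended chain $H_1$ carries exactly the weights $k_i,k_i-2,\ldots,-k_i+2,-k_i$ of the $(k_i+1)$-dimensional $\mathfrak{sl}_2$-module: the shift by $I_{k_i}$ turns $D_{k_i}$ into $\mathrm{diag}(k_i,\ldots,-k_i+2)$, and the entry $d_p+1=-k_{i_p}$ supplies the final weight $-k_i$ carried by $v_{r+p}$. With this description the weight-difference checks for $[H_1,E_1]$ and $[H_1,F_1]$ become immediate on each chain.

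It then remains to establish $[E_1,F_1]=H_1$. Since $E_1F_1-F_1E_1$ is again diagonal, this amounts to checking, chain by chain, that the diagonal it produces equals the weight grading above. On a chain $u_1\to u_2\to\cdots\to u_{d}$ with $E_1u_\alpha=u_{\alpha-1}$ and $F_1u_\alpha=c_\alpha u_{\alpha+1}$, a direct computation gives $[E_1,F_1]u_\alpha=(c_\alpha-c_{\alpha-1})u_\alpha$ with the conventions $c_0=c_{d}=0$; matching this telescoping difference against the prescribed weights reproduces exactly the inductive definitions $a_1^i$, $a_\alpha^i-a_{\alpha-1}^i$, and the coefficient $k_{i_p}$ recorded in $\tilde C_{r_1}$. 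I expect the main obstacle to be organizational rather than conceptual: one must track the nonstandard ordering of an extended chain, whose lowest-weight vector $v_{r+p}$ lives in a coordinate block separate from $J_i$ and is linked to it only through $C_{r_1}$ and $\tilde C_{r_1}$, and then verify that the coefficient $c_{k_i}=k_{i_p}=k_i$ stored in $\tilde C_{r_1}$ correctly closes the telescoping identity at the final vector (the consistency $-c_{k_i}=-k_i$ being guaranteed because the weights on each chain sum to zero). Assembling the chains then yields the three relations simultaneously.
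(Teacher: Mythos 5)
Your proposal is correct, but it organizes the verification along a genuinely different decomposition than the paper. The paper proves Lemma \ref{lem;lemma2} by brute-force block multiplication: it splits $H_1=H_1^D+H_1^I$, computes $[H_1,E_1]$ and $[H_1,F_1]$ columnwise (e.g.\ ${C_{r_1}}'-{C_{r_1}}''=2C_{r_1}$), and then matches $J_i\tilde J_i-\tilde J_i J_i$, corrected by the terms $J_{i_p}'$ and $\tilde C_{r_1}C_{r_1}$, against $D_{k_i}+I_{k_{i_p}}$ through a case analysis on whether $k_1+\cdots+k_i\in\{i_1,\ldots,i_{r_1}\}$. You instead decompose $\mathbb{C}^m$ into $E_1$-cyclic chains: an unextended block is exactly a Jordan string governed by Proposition \ref{prop;proposition1}, while a block with terminal index $i_p$ together with $e_{r+p}$ forms a single string of length $k_i+1$ on which $H_1$ carries the weights $k_i,k_i-2,\ldots,-k_i$ (the $I_{k_{i_p}}$ shift and the entry $d_p+1=-k_{i_p}$ being precisely what makes this work, including the coupling check $(-k_i+2)-(-k_i)=2$ at position $(i_p,r+p)$); then $[E_1,F_1]=H_1$ reduces to the telescoping identity $[E_1,F_1]u_\alpha=(c_\alpha-c_{\alpha-1})u_\alpha$, whose solution $c_\alpha=\alpha(k_i+1-\alpha)$ reproduces the inductive definition of the $a_\alpha^i$ and gives $c_{k_i}=k_{i_p}$ as stored in $\tilde C_{r_1}$. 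What your route buys is conceptual transparency: it exhibits the extended configuration as, up to a permutation of the basis, a single Jordan block of size $k_i+1$ carrying the standard triple of Proposition \ref{prop;proposition1}, thereby explaining where the coefficients come from rather than merely confirming them. Two routine points you leave implicit and should record to make the argument airtight: the indices $i_1,\ldots,i_{r_1}$ are pairwise distinct, so your chains are disjoint and, together with the zero-weight singletons $e_{r+r_1+1},\ldots,e_m$ on which $E_1$, $F_1$ and $H_1$ all vanish, partition the standard basis; and the chain-end consistency $c_{k_i+1}=0$ is equivalent to the vanishing of the weight sum on each string, as you correctly observe.
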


\begin{proof}
    Similarly, it is enough to prove that $[H_1, E_1] = 2E_1$, $[H_1, F_1] = -2F_1$ and $[E_1, F_1] = H_1$. Let $H_1^D$ and $H_1^I$ denote the first and second terms of (\ref{eqn;eqn6}), respectively, so that $H_1 = H_1^D + H_1^I$. Using Proposition \ref{prop;proposition1}, we have
    \begin{align*}
    [H_1, E_1]
    &= (H_1^D + H_1^I)E_1 - E_1(H_1^D + H_1^I) \\
    &= H_1^DE_1 - E_1H_1^D \\
    &= 2 \left(\small
    \begin{array}{ccc|c}
        J_1 & & & \quad\\
        & \ddots & &\\
        & & J_t &\\
        \hline
        & & & \\
    \end{array}
    \right) + \left(\small
    \begin{array}{cc|cc}
         & & | &\\
        & & {C_{r_1}}' & 0\\
        & & | & \\
        \hline
        & & &\\
    \end{array}
    \right) - \left(\small
    \begin{array}{cc|cc}
         & & | &\\
        & & {C_{r_1}}'' & 0\\
        & & | & \\
        \hline
        & & & \\
    \end{array}
    \right),
    \end{align*}
    where ${C_{r_1}}' = ((-k_{i_1}+1)e_{i_1} \cdots (-k_{i_{r_1}}+1)e_{i_{r_1}})$ and ${C_{r_1}}'' = ((-k_{i_1}-1)e_{i_1} \cdots (-k_{i_{r_1}}-1)e_{i_{r_1}})$, so that ${C_{r_1}}' - {C_{r_1}}'' = 2{C_{r_1}}$. Hence, $[H_1, E_1] = 2E_1$. Similarly,
    \begin{align*}
    &[H_1, F_1]\\
    &= -2 \left(\small
    \begin{array}{ccc|c}
        J_1 & & & \quad\\
        & \ddots & &\\
        & & J_t &\\
        \hline
        & & & \\
    \end{array}
    \right) + \left(\small
    \begin{array}{ccc|c}
        \; & & & \quad\\
        & \; & &\\
        & & \; &\\
        \hline
        - & {\tilde{C}_{r_1}}' & - & \\
        & 0 & & \\
    \end{array}
    \right) - \left(\small
    \begin{array}{ccc|c}
        \; & & & \quad\\
        & \; & &\\
        & & \; &\\
        \hline
        - & {\tilde{C}_{r_1}}'' & - & \\
        & 0 & & \\
    \end{array}
    \right),
    \end{align*}
    where ${\tilde{C}_{r_1}}' = (d_1k_{i_1}e_{i_1} \cdots d_{r_1}k_{i_{r_1}}e_{i_{r_1}})^t$ and ${\tilde{C}_{r_1}}'' = ((-k_{i_1}+1)k_{i_1}e_{i_1} \cdots (-k_{i_{r_1}}+1)k_{i_{r_1}}e_{i_{r_1}})^t
    $, so that ${\tilde{C}_{r_1}}' - {\tilde{C}_{r_1}}'' = -2{\tilde{C}_{r_1}}$. Hence, $[H_1, F_1] = -2F_1$. Finally,
    \begin{align*}
    [E_1, F_1] = &\left(
    \begin{array}{ccc|c}
        J_1 \tilde{J_1} & & & \quad \\
        & \ddots & &\\
        & & J_t \tilde{J_t} &\\
        \hline
        & & &\\
    \end{array}
    \right) - \left(
    \begin{array}{ccc|cc}
        \tilde{J_1} J_1 & & & \quad & \quad\\
        & \ddots & & & \\
        & & \tilde{J_t} J_t & & \\
        \hline
        & & & \tilde{C}_{r_{1}} C_{r_{1}} &\\
        & & & & 0\\
    \end{array}
    \right)\\
    &+ \left(
    \begin{array}{ccc|c}
        J_{i_1}' & & & \quad \\
        & \ddots & &\\
        & & J_{i_{r_1}}' &\\
        \hline
        & & &\\
    \end{array}
    \right),
    \end{align*}
    where $\tilde{C}_{r_{1}} C_{r_{1}} = (\tilde{e}_{i_1} \cdots \tilde{e}_{i_{r_1}})^t(e_{i_1} \cdots e_{i_{r_1}}) = (k_{i_p})_{p = 1}^{r_1} \in \text{Mat}_{r_1 \times r_1}$. Note that $-k_{i_p} = d_p + 1$. The last summand is defined as $r \times r$ matrix constructed by replacing $J_i$ with $J_{i_p}'$ if $k_1 + \cdots  + k_i = i_p$ and $J_i$ with a zero matrix otherwise. Here, $J_{i_p}'$ is defined as follows for each $p = 1, \ldots, r_1$.
    \[
    J_{i_p}' = \begin{pmatrix}
        0 & & & \\
         & \ddots & & \\
         & & 0 & \\
         & & & k_{i_p}
    \end{pmatrix} \in \text{Mat}_{k_{i_p}\times k_{i_p}}.
    \]
    Moreover, we have
    \begin{equation*}
    J_i \tilde{J_i} - \tilde{J_i} J_i 
    = \begin{pmatrix}
    a_1^i & & & &\\
    & a_2^i - a_1^i & & &\\
    & & \ddots & &\\
    & & & a_{k_i - 1}^i - a_{k_i - 2}^i &\\
    & & & & -a_{k_i - 1}^i\\
\end{pmatrix}.
    \end{equation*}
    If $k_1 + \cdots  + k_i \in \{i_1, \ldots , i_{r_1}\}$, note that
    \begin{align*}
        a_1^i &= k_i = (k_i - 1) + 1 = (D_{k_i})_{11} + 1\\
        a_{\alpha}^i - a_{\alpha-1}^i &= (D_{k_i})_{\alpha\alpha} + 1\\
        -a_{k_i - 1}^i &= -2k_i + 2, \quad \alpha = 2, \ldots, k_i - 1.
    \end{align*}
    Here, if $k_1 + \cdots  + k_i = i_p$, we have $-a_{k_i - 1}^i + k_{i_p} = (-2k_i + 2) + k_i = (-k_i + 1) + 1 = (D_{k_i})_{k_ik_i} + 1$. Therefore,
    \[
    J_i \tilde{J_i} - \tilde{J_i} J_i + J_{i_p}' = D_{k_i} + I_{k_{i_p}}.
    \]
    Otherwise, note that
    \begin{align*}
        a_1^i &= k_i - 1 = (D_{k_i})_{11}\\
        a_{\alpha}^i - a_{\alpha-1}^i &= (D_{k_i})_{\alpha\alpha}\\
        -a_{k_i - 1}^i &= -k_i + 1 = (D_{k_i})_{k_ik_i}, \quad \alpha = 2, \ldots, k_i - 1
    \end{align*}
    Hence, we have $[E_1, F_1] = H_D + H_I = H_1$.
\end{proof}

Similarly, we can work for $E_2$.

\begin{lemma}\label{lem;lemma3}
    Given a nilpotent element $E_2 \in \mathfrak{sl_n}$ of the form (\ref{eqn;eqn5}), the semisimple element $H_2$ and the nilnegative element $F_2$ described below form the associated $\mathfrak{sl}_2$-triple:
    \begin{align*}
    H_2 &= 
    \left(
    \begin{array}{ccc|cccc}
       D_{k_1} & & & & & & \\
        & \ddots & & & & &\\
        & & D_{k_t} & & & &\\
        \hline
        & & & d_1' & & &\\
        & & & & \ddots & &\\
        & & & & & d_{r_2}' &\\
        & & & & & & 0\\
    \end{array}
    \right) - \left(
    \begin{array}{ccc|cc}
       I_{k_{j_1}} & & & &\\
        & \ddots & & &\\
        & & I_{k_{j_{r_2}}} & &\\
        \hline
        & & & I_{r_2} &\\
        & & & & 0\\
    \end{array}
    \right), \\
    F_2 &= \left( \begin{array}{ccc|cc}
       \bar{J_1} & & & | & \\
       & \ddots & & \bar{R}_{r_2} & 0 \\
       & & \bar{J_t} & | & \\
       \hline
       & & & &\\
       & & & &
    \end{array}
    \right), \nonumber
    \end{align*}
    where 
    \begin{enumerate}[label = $\bullet$, leftmargin = 0pt]
        \item for each $i = 1, \ldots, t$, recall that $D_{k_i}$ is the diagonal matrix of size $k_i \times k_i$ defined in Proposition \ref{prop;proposition1}.
        \item for each $q = 1, \ldots, r_2$, let 
        \[d_q' = k_{j_q} + 1,\] 
        when we define $k_{j_q}$ as follows: if $j_q = 1+ k_1 + \cdots + k_j$ for some $j$, then $k_{j_q} = k_{j+1}$; if $j_q = 1$, we set $k_{j_q} = k_1$.
        \item for each $j = 1, \ldots, t$, the block $J_j$ of $E_2$ is replaced by the identity matrix $I_{k_j}$ if $1+ k_1 + \cdots  + k_{j-1} \in \{j_1, \ldots , j_{r_2}\}$; in this case, we denote it by $I_{k_{j_q}}$. Otherwise, $J_j$ is replaced by the zero matrix.
        \item \[
    \bar{J_j} = \begin{pmatrix}
    0 & & & & &\\
    b_1^j & 0 & & & &\\
    & b_2^j & 0 & & &\\
    & & & \ddots & &\\
    & & & & 0 &\\
    & & & & b_{k_j-1}^j & 0\\
\end{pmatrix},
    \]
    where each $b_{\beta}^j$ is defined inductively as
    \[b_{k_j-1}^j =
    \begin{cases}
        k_j, & \text{if} \ 1+ k_1 + \cdots  + k_{j-1} \in \{j_1, \ldots , j_{r_2}\}\\
        k_j-1, & \text{otherwise}
    \end{cases}\]
    and
    \[b_{k_j-\beta}^j - b_{k_j+1-\beta}^j =\small
    \begin{cases}
        (D_{k_j})_{\beta\beta} +1, & \text{if} \ 1+ k_1 + \cdots  + k_{j-1} \in \{j_1, \ldots , j_{r_2}\}\\
        -(D_{k_j})_{(k_j+1-\beta)(k_j+1-\beta)}, & \text{otherwise}
    \end{cases}\]
    for each $\beta = 2, \ldots, k_j - 1$.
    \item $\bar{R}_{r_2} = (\bar{e}_{j_1} \cdots \bar{e}_{j_{r_2}})$, where $\bar{e}_{j_q} = (d_q' - 1)e_{j_q} = k_{j_q}e_{j_ q}$ for each $q = 1, \ldots, r_2$.
    \end{enumerate}
\end{lemma}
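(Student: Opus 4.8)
The plan is to prove, exactly as in Lemma \ref{lem;lemma2}, that the three defining relations
\[
[H_2, E_2] = 2E_2, \qquad [H_2, F_2] = -2F_2, \qquad [E_2, F_2] = H_2
\]
hold, and then invoke the remark that the subalgebra generated by an $\mathfrak{sl}_2$-triple is isomorphic to $\mathfrak{sl}_2$. The guiding observation is that $E_2$ is, up to the trailing zero rows and columns, the transpose-type dual of $E_1$: where $E_1$ carried a column-coupling $C_{r_1}$ attached to the \emph{bottom} row $i_p$ of each selected Jordan block, $E_2$ carries a row-coupling $R_{r_2}$ attached to the \emph{top} row $j_q$ of each selected block. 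This is why the correction term in $H_2$ enters with a minus sign, why $d_q' = k_{j_q}+1$ is positive, and why the auxiliary subdiagonal entries $b_\beta^j$ of $\bar{J_j}$ are defined by a downward recursion starting from $b_{k_j-1}^j$ rather than an upward one starting from $a_1^i$.

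First I would write $H_2 = H_2^D - H_2^I$, where $H_2^D$ is the diagonal term built from the $D_{k_i}$ and the $d_q'$, and $H_2^I$ is the identity-correction term. Since $R_{r_2}$ connects only column-slots $j_q$ (which always lie in a selected block) to row-slots $r+q$, on both of which $H_2^I$ acts by the same scalar, $H_2^I$ commutes with both $E_2$ and $F_2$; hence $[H_2, E_2] = [H_2^D, E_2]$ and $[H_2, F_2] = [H_2^D, F_2]$. The Jordan-block contributions are then handled by Proposition \ref{prop;proposition1} ($[D_{k_i}, J_i] = 2J_i$ and, by the same diagonal-weight computation, $[D_{k_i}, \bar{J_i}] = -2\bar{J_i}$), and for the coupling one only reads off the weight of the arrow: the entry of $R_{r_2}$ at position $(r+q, j_q)$ acquires the factor $(H_2^D)_{r+q,r+q} - (H_2^D)_{j_q,j_q} = (k_{j_q}+1) - (k_{j_q}-1) = 2$, while that of $\bar{R}_{r_2}$ at position $(j_q, r+q)$ acquires the factor $-2$. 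This is precisely what forces the eigenvalue $d_q' = k_{j_q}+1$, and these two relations are routine once the commuting of $H_2^I$ is checked.

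The substantive relation is $[E_2, F_2] = H_2$. Writing both matrices in their essential $2\times 2$ block form (Jordan part over coupling part), a direct multiplication shows that the two off-diagonal blocks of $[E_2, F_2]$ are $J\bar{R}_{r_2}$ and $R_{r_2}\bar{J}$, and both vanish: each $J_j$ kills the top basis vector of its block (its first column is zero) and each $\bar{J_j}$ has vanishing top row, so the coupling indices $j_q$ are annihilated on both sides. The lower-right coupling block evaluates to $R_{r_2}\bar{R}_{r_2} = \mathrm{diag}(k_{j_q})_{q=1}^{r_2}$, which matches the coupling diagonal $d_q' - 1 = k_{j_q}$ of $H_2$. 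It then remains to match the upper-left block, that is, to show that on each Jordan block $J_j\bar{J_j} - \bar{J_j}J_j$, corrected by the contribution $-\bar{R}_{r_2}R_{r_2}$ (which deposits exactly $-k_{j_q}$ in the top-left corner of each selected block), equals $D_{k_j}$ in the unselected case and $D_{k_j} - I_{k_{j_q}}$ in the selected case.

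The main obstacle, and the only place where genuine computation is needed, is this last identity, for which the downward recursion defining the $b_\beta^j$ is engineered. As in Lemma \ref{lem;lemma2} one computes $J_j\bar{J_j} - \bar{J_j}J_j = \mathrm{diag}(b_1^j,\, b_2^j - b_1^j,\, \ldots,\, b_{k_j-1}^j - b_{k_j-2}^j,\, -b_{k_j-1}^j)$, so the recursion is exactly what makes the successive differences telescope onto the diagonal entries of $D_{k_j}$ (shifted by $-1$ in the selected case). The two endpoints are then checked separately: the bottom entry $-b_{k_j-1}^j$ reproduces the last diagonal entry of $D_{k_j}$ (lowered by $1$ in the selected case) straight from the base value $b_{k_j-1}^j$, with no coupling correction; the top entry $b_1^j$ equals the first diagonal entry of $D_{k_j}$ directly in the unselected case, while in the selected case the single correction $-k_{j_q}$ from $-\bar{R}_{r_2}R_{r_2}$ brings it down to $(D_{k_j})_{11}-1$. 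The latter computation uses the tracelessness of $D_{k_j}$ to sum the intermediate differences. Since the off-diagonal vanishing and the coupling block are immediate and $[H_2,E_2]$, $[H_2,F_2]$ reduce to weight computations, verifying this telescoping identity at both endpoints is where the care lies, and it follows the pattern already established for $E_1$ mutatis mutandis.
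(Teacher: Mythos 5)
Your proposal is correct and takes essentially the same route as the paper, which proves Lemma \ref{lem;lemma3} only by the remark ``Similarly, we can work for $E_2$'' --- that is, by the direct blockwise verification of $[H_2,E_2]=2E_2$, $[H_2,F_2]=-2F_2$, $[E_2,F_2]=H_2$ carried out for $E_1$ in Lemma \ref{lem;lemma2}, with the coupling moved from the bottom rows $i_p$ to the top rows $j_q$. Your details all check out: $H_2^I$ commutes with $E_2$ and $F_2$ since every nonzero entry joins indices on which it acts by the same scalar, $J\bar{R}_{r_2}=0$ and $R_{r_2}\bar{J}=0$ because $j_q$ heads its block, $R_{r_2}\bar{R}_{r_2}=\mathrm{diag}(k_{j_q})_{q=1}^{r_2}$ matches $d_q'-1$, the term $-\bar{R}_{r_2}R_{r_2}$ deposits $-k_{j_q}$ at the top-left corner of each selected block, and the downward recursion for the $b^j_\beta$ together with the tracelessness of $D_{k_j}$ telescopes the diagonal of $J_j\bar{J_j}-\bar{J_j}J_j$ onto $D_{k_j}$ (respectively $D_{k_j}-I_{k_{j_q}}$ after the corner correction) exactly as required.
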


Using Lemma \ref{lem;lemma2} and \ref{lem;lemma3}, we can construct the $\mathfrak{sl}_2$-triple $\{H, E, F\}$.
\begin{proposition}\label{prop;proposition2}
    Given an even nilpotent element $E$ of the form (\ref{eqn;eqn5}), the semisimple element $H$ and the nilnegative element $Y$ described below form the associated $\mathfrak{sl}_2$-triple:
    \begin{align}
    H &= \scriptsize \left(
\begin{array}{c|c}
    \begin{array}{ccc|cccc}
        D_{k_1} & & & & & & \\
        & \ddots & & & & & \\
        & & D_{k_t} & & & & \\
        \hline
        & & & d_1 & & & \\
        & & & & \ddots & & \\
        & & & & & d_{r_1} & \\
        & & & & & & 0 \\
    \end{array} &\\
    \hline
     & \begin{array}{ccc|cccc}
        D_{k_1} & & & & & & \\
        & \ddots & & & & & \\
        & & D_{k_t} & & & & \\
        \hline
        & & & d_1' & & & \\
        & & & & \ddots & & \\
        & & & & & d_{r_2}' & \\
        & & & & & & 0 \\
    \end{array}
\end{array}
\right) \label{eqn;eqn8} \\
    &+ \scriptsize \left(
\begin{array}{c|c}
    \begin{array}{ccc|cc}
        I_{k_{i_1}} & & & & \\
        & \ddots & & &  \\
        & & I_{k_{i_{r_1}}} & & \\
        \hline
        & & & I_{r_1} & \\
        & & & & 0 \\
    \end{array} &\\
    \hline
     & \begin{array}{ccc|cc}
        -I_{k_{j_1}} & & & & \\
        & \ddots & & &  \\
        & & -I_{k_{j_{r_2}}} & & \\
        \hline
        & & & -I_{r_2} & \\
        & & & & 0 \\
    \end{array}
\end{array}
\right), \nonumber \\
    F &= \left(
\begin{array}{c|c}
    \begin{array}{ccc|c}
       \tilde{J_1} & & &\quad\\
       & \ddots & &\\
       & & \tilde{J_t} &\\
       \hline
       - & \tilde{C}_{r_{1}} & - &\\
       & 0 & &
    \end{array} &\\
    \hline
     & \begin{array}{ccc|cc}
       \bar{J_1} & & & | & \\
       & \ddots & & \bar{R}_{r_2} & 0 \\
       & & \bar{J_t} & | & \\
       \hline
       & & & &\\
       & & & &
    \end{array}
\end{array}
\right). \nonumber
    \end{align}
    We use the same definitions of $D_{k_i}, d_p, d_q', I_{k_{i_p}}, I_{k_{j_q}}, \tilde{J}_i, \bar{J}_j, \tilde{C}_{r_{1}}, \bar{R}_{r_2}$ as those given in Lemma \ref{lem;lemma2} and \ref{lem;lemma3}.
\end{proposition}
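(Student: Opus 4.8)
The plan is to verify directly the three defining relations of an $\mathfrak{sl}_2$-triple, namely $[H,E]=2E$, $[H,F]=-2F$, and $[E,F]=H$, reducing each of them to the corresponding identities already established in Lemmas \ref{lem;lemma2} and \ref{lem;lemma3}. The essential observation is structural: the matrices $E$, $H$, and $F$ displayed in (\ref{eqn;eqn8}) are all block-diagonal with respect to the $\mathbb{Z}_2$-grading of $\mathbb{C}^{m|n}$, so that
\[
E = \begin{pmatrix} E_1 & \\ & E_2 \end{pmatrix}, \quad
H = \begin{pmatrix} H_1 & \\ & H_2 \end{pmatrix}, \quad
F = \begin{pmatrix} F_1 & \\ & F_2 \end{pmatrix},
\]
with $H_1, F_1$ as in Lemma \ref{lem;lemma2} and $H_2, F_2$ as in Lemma \ref{lem;lemma3}. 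In particular all three elements are even, so each of the relevant brackets in $\mathfrak{gl}(m|n)$ is simply the ordinary matrix commutator, with no sign correction arising from the supercommutator.

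Next I would exploit the fact that products, and hence commutators, of block-diagonal matrices are computed blockwise. Thus, for instance,
\[
[H,E] = HE - EH
= \begin{pmatrix} H_1E_1 - E_1H_1 & \\ & H_2E_2 - E_2H_2 \end{pmatrix}
= \begin{pmatrix} [H_1,E_1] & \\ & [H_2,E_2] \end{pmatrix},
\]
and similarly for $[H,F]$ and $[E,F]$. In this way each of the three $\mathfrak{sl}_2$-relations for $\{H,E,F\}$ splits into the pair of relations for $\{H_1,E_1,F_1\}$ in $\mathfrak{gl}_m$ and $\{H_2,E_2,F_2\}$ in $\mathfrak{gl}_n$.

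Finally I would invoke Lemma \ref{lem;lemma2}, which gives $[H_1,E_1]=2E_1$, $[H_1,F_1]=-2F_1$, and $[E_1,F_1]=H_1$, together with Lemma \ref{lem;lemma3}, which yields the analogous identities $[H_2,E_2]=2E_2$, $[H_2,F_2]=-2F_2$, and $[E_2,F_2]=H_2$ for the odd block. Assembling the two blocks produces $[H,E]=2E$, $[H,F]=-2F$, and $[E,F]=H$, so $\{H,E,F\}$ is an $\mathfrak{sl}_2$-triple containing $E$; since $H$, $E$, and $F$ all lie in $\mathfrak{g}_{\bar{0}} = \mathfrak{gl}_m \oplus \mathfrak{gl}_n$, this triple sits inside the even part, as required.

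As for the main obstacle: the assembly step carries essentially no difficulty, since the full content of the proposition is supplied by the blockwise computations of Lemmas \ref{lem;lemma2} and \ref{lem;lemma3}. The only point demanding care is that Lemma \ref{lem;lemma3} is stated with its proof merely indicated (``Similarly, we can work for $E_2$''); to make the present argument self-contained one would carry out the commutator computations for $\bar{J}_j$, $\bar{R}_{r_2}$, and the diagonal part exactly as in the proof of Lemma \ref{lem;lemma2}, paying attention to the reversed indexing in the definition of $b^j_\beta$, which reflects that the rank-one corrections $R_{r_2}$ for $E_2$ attach to the bottom rows rather than to the right columns as $C_{r_1}$ does for $E_1$. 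Once that mirror computation is confirmed, the proposition follows at once.
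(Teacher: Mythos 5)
Your proposal is correct and coincides with the paper's intended argument: the paper states Proposition \ref{prop;proposition2} without a written proof, the justification being precisely the blockwise assembly you describe, since $H=\operatorname{diag}(H_1,H_2)$ and $F=\operatorname{diag}(F_1,F_2)$ commute block-by-block with $E=\operatorname{diag}(E_1,E_2)$ and the three $\mathfrak{sl}_2$-relations reduce to Lemmas \ref{lem;lemma2} and \ref{lem;lemma3}. Your remark that Lemma \ref{lem;lemma3} is only sketched in the paper (``Similarly, we can work for $E_2$'') and would need the mirrored computation to make the argument fully self-contained is accurate and well placed.
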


\vspace{1pt}

\section{Jacobson-Morozov Theorem analogue in $\mathfrak{gl}(m|n)$}

In this section, we state and prove an analogue of the Jacobson-Morozov theorem for the Lie superalgebra $\mathfrak{gl}(m|n)$, which constitutes our main result. In contrast to the case of $\mathfrak{sl}_m$, not every odd nilpotent in $\mathfrak{gl}(m|n)$ admits an $\mathfrak{osp}(1|2)$-subalgebra.

Let $V = V_{\bar{0}} \oplus V_{\bar{1}}$ be a vector superspace with $\dim V = (m|n)$. We replace the basis
\[ \mathcal{B} = \{v_{\bar{1}}, \ldots, v_{\bar{m}}; v_1, \ldots, v_n\}\]
of $V$ with a disjoint union of bases whose elements alternate in parity. For example, when $m \leq n$, we may take
\[\mathcal{B}' = \{v_{\bar{1}}, v_1, v_{\bar{2}}, v_2, \ldots, v_{\bar{m}}\} \sqcup \{v_m\} \sqcup \cdots \sqcup \{v_n\}.\]
\begin{definition}
    Let $x \in \text{End}V$. Then a \textbf{super Jordan block} of $x$ of size $r$ is the matrix associated with an ordered basis $\{v_r, v_{r-1}, \ldots , v_1\}$ consisting of alternating parities and satisfying
    \[xv_i = \lambda v_i + v_{i+1}, \quad 1 \leq i \leq r-1  \quad \text{and} \quad  xv_r = \lambda v_r.\]
    In matrix form, we have
    \[
    J_{\lambda, r}^{\text{super}} = \begin{pmatrix}
        \lambda & 1 & 0 & \cdots & 0 \\
        0 & \lambda & 1 & \cdots & 0 \\
        \vdots & \vdots & \ddots & \ddots & \vdots \\
        0 & 0 & \cdots & \lambda & 1 \\
        0 & 0 & \cdots & 0 & \lambda
    \end{pmatrix}.
    \]
    We define a \textbf{super Jordan matrix} as a block diagonal matrix composed of super Jordan blocks.
\end{definition}

\begin{example}
    In $\mathfrak{gl}(4|2)$, consider
    \[
    x= \left(
    \begin{array}{cccc|cc}
        & & & & 1 & \\
       & & & & & 1 \\
       & & & & & \\
       & & & & & \\
       \hline
       & & 1 & & &\\
       & & & 1 & &
    \end{array}
    \right) \in \mathfrak{gl}(4|2).
    \]
    The corresponding super Jordan matrix of $x$ is the matrix associated with the ordered basis 
    \[\{e_{\bar{2}}, e_2, e_{\bar{4}}\} \;\sqcup\; \{e_{\bar{1}}, e_1, e_{\bar{3}}\},\] 
    where each $e_i$ denotes a standard unit vector. Therefore, the super Jordan matrix of $x$ is given by
    \[
    \left(
    \begin{array}{c|c}
       J_{0, 3}^{\text{super}} & \\
       \hline
       & J_{0, 3}^{\text{super}}
    \end{array}
    \right)
    =
    \left(
    \begin{array}{ccc|ccc}
       0 & 1 & & & & \\
       & 0 & 1 & & & \\
       & & 0 & & & \\
       \hline
       & & & 0 & 1 & \\
       & & & & 0 & 1 \\
       & & & & & 0
    \end{array}
    \right).
    \]
\end{example}

\vspace{1pt}

This theorem constitutes the main result of this paper.
\begin{theorem}\label{thm;theorem1}
    Let $\mathfrak{g} = \mathfrak{gl}(m|n)$, and $\text{Lie}(G_{\bar{0}}) = \mathfrak{g}_{\bar{0}}$ for some semisimple, simply connected group $G_{\bar{0}}$. Then an odd nilpotent element $e \in \mathfrak{g}$ is contained in an $\mathfrak{osp}(1|2)$-subalgebra of $\mathfrak{g}$ if and only if $e$ lies in the $G_{\bar{0}}$-orbit of a super Jordan matrix consisting entirely of super Jordan blocks of odd size.
\end{theorem}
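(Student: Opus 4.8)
The plan is to route everything through the even nilpotent $E = e^2$ and the $\mathfrak{sl}_2$-triple $\{H,E,F\}$ constructed in Proposition \ref{prop;proposition2}. Recall that in $\mathfrak{osp}(1|2)$ the two odd root vectors generate the whole algebra: if $e,f$ denote the odd elements, then $[e,e]=2e^2$, $[f,f]=2f^2$ and $H:=[e,f]$ span the even part $\mathfrak{sl}_2$, and one has $[H,e]=e$, $[H,f]=-f$, $[H,e^2]=2e^2$, $[H,f^2]=-2f^2$. Thus if $e$ lies in an $\mathfrak{osp}(1|2)$-subalgebra, then $\{H,E,F\}$ with $E=e^2$ and $F=-f^2$ is an $\mathfrak{sl}_2$-triple containing $E$, and by Kostant's theorem its semisimple element is $G_{\bar 0}$-conjugate to the one produced in Proposition \ref{prop;proposition2}. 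The decisive extra datum is the relation $[H,e]=e$: I will show that $e$ embeds into an $\mathfrak{osp}(1|2)$-subalgebra if and only if the semisimple element $H$ of the $\mathfrak{sl}_2$-triple of $E=e^2$ can be chosen to satisfy $[H,e]=e$, and that this is in turn equivalent to all super Jordan blocks of $e$ having odd size.

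For necessity I would argue representation-theoretically. A choice of $\mathfrak{osp}(1|2)$-subalgebra makes $V=\mathbb{C}^{m|n}$ a finite-dimensional $\mathfrak{osp}(1|2)$-module, and since every finite-dimensional $\mathfrak{osp}(1|2)$-module is completely reducible (a classical fact, $\mathfrak{osp}(1|2)$ being the basic classical superalgebra with semisimple representation category), $V=\bigoplus_i V(\ell_i)$ with each $V(\ell_i)$ irreducible of dimension $2\ell_i+1$. On $V(\ell)$ the $H$-weights are the consecutive integers $-\ell,-\ell+1,\ldots,\ell$, each weight space one-dimensional and of parity alternating with the weight, and $e$ raises the weight by $1$; hence $e|_{V(\ell_i)}$ is a single nilpotent shift, i.e. a super Jordan block of odd size $2\ell_i+1$. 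Reassembling the weight bases into super Jordan form is a parity-preserving change of basis, realized by $GL_m\times GL_n$; as central scalars act trivially in the adjoint action this coincides with the $G_{\bar 0}$-orbit. Therefore $e$ lies in the $G_{\bar 0}$-orbit of a super Jordan matrix all of whose blocks have odd size.

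For sufficiency I may assume $e$ is already such a super Jordan matrix, and since the triple is assembled block by block it suffices to treat a single super Jordan block of odd size $r=2\ell+1$ on a basis $v_1,\dots,v_r$ with $e v_i=v_{i+1}$ (setting $v_{r+1}:=0$). Declaring $H v_i=(i-\ell-1)v_i$ gives $[H,e]=e$, and I then seek the odd nilnegative $f$ with $f v_i=c_i v_{i-1}$ (and $c_1:=0$). The relation $[e,f]=H$ amounts to the recursion $c_i+c_{i+1}=i-\ell-1$ with boundary values $c_1=0$ and $c_{r+1}=0$; solving forward from $c_1=0$ determines all $c_i$, and the terminal condition $c_{r+1}=0$ holds precisely because $r$ is odd (for even $r$ the same recursion forces $c_{r+1}\neq 0$, which is exactly the obstruction, reflecting the absence of even-dimensional $\mathfrak{osp}(1|2)$-irreducibles). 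Once such $f$ exists, a direct check gives $[H,f]=-f$, so $e$ and $f$ generate a nonzero homomorphic image of the simple superalgebra $\mathfrak{osp}(1|2)$, necessarily isomorphic to it, containing $e$.

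The main obstacle is the bookkeeping that links the abstract condition $[H,e]=e$ to the explicit data of Section 3. One must check that the semisimple element $H$ of Proposition \ref{prop;proposition2}, written against the Jenkins--Nakano normal form (\ref{eqn;eqn4}), satisfies $[H,e]=e$ exactly when the Jordan blocks of $E=e^2$ pair up into consecutive sizes $(\ell+1,\ell)$, which is the translation of ``all super Jordan blocks of $e$ are odd'' into the Jordan type of $E=e^2$. Concretely, squaring a super Jordan block of size $2\ell+1$ splits it, according to the parity of the index, into two ordinary Jordan blocks of sizes $\ell+1$ and $\ell$, whereas an even block of size $2\ell$ yields two equal blocks of size $\ell$; tracking these pairings through $e^+,e^-$ and the diagonal entries of $H$ in (\ref{eqn;eqn8}) is where care is required. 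Verifying the terminal condition $c_{r+1}=0$ for every odd $r$ (equivalently, the solvability of the recursion, equivalently the genuine existence of the odd nilnegative element) is the one load-bearing computation, and it is precisely the point at which the odd-size hypothesis is consumed.
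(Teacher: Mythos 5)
Your proposal is correct, but it takes a genuinely different route from the paper's. The paper never leaves the Jenkins--Nakano normal form (\ref{eqn;eqn4}): it constructs the explicit $\mathfrak{sl}_2$-triple $\{H,E,F\}$ for $E=e^2$ (Proposition \ref{prop;proposition2}), sets $f=[F,e]$, reduces $\langle e,f,H,E,F\rangle\simeq\mathfrak{osp}(1|2)$ to the two conditions $[H,e]=e$ and $[F,f]=0$ (Lemma \ref{lem;lemma4}), verifies both by lengthy block-by-block matrix computations (Lemmas \ref{lem;lemma6} and \ref{lem;lemma7}) to arrive at the combinatorial criterion $\{J_1,\ldots,J_t\}=\{J_{i_1},\ldots,J_{i_{r_1}}\}\sqcup\{J_{j_1},\ldots,J_{j_{r_2}}\}$ and $s=0$, and only then translates that criterion into the odd-size super Jordan condition (Lemma \ref{lem;lemma8}). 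You bypass the normal form and all of these computations: for necessity you use complete reducibility of finite-dimensional $\mathfrak{osp}(1|2)$-modules together with the weight structure of the $(2\ell+1)$-dimensional irreducibles, so that each irreducible summand of $V$ is literally one odd-size super Jordan block for $e$; for sufficiency you solve the two-term recursion $c_i+c_{i+1}=i-\ell-1$ on a single block, the terminal condition $c_{r+1}=0$ being exactly where oddness is consumed (checking: $c_{2j+1}=j$, $c_{2j}=j-1-\ell$, so $c_{2\ell+2}=0$), and then simplicity of $\mathfrak{osp}(1|2)$ upgrades the relations $[e,f]=H$, $[H,e]=e$, $[H,f]=-f$ to an embedding. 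Your route buys brevity and, more substantively, rigor in the ``only if'' direction: you treat an \emph{arbitrary} $\mathfrak{osp}(1|2)$-subalgebra containing $e$ head-on, whereas the paper tests only its one constructed candidate $\langle e,f,H,E,F\rangle$ and implicitly leans on Kostant conjugacy or the result of Entova-Aizenbud and Serganova \cite{ES} to rule out other embeddings; what the paper's computation buys in exchange is explicit data (orbit representatives and concrete formulas for $H$, $F$, $f$ in Jenkins--Nakano coordinates) that your abstract argument does not produce. Two small repairs: your parenthetical ``central scalars act trivially in the adjoint action'' is false as stated, since $(\lambda I_m,\mu I_n)$ scales the odd blocks by $\lambda\mu^{\pm 1}$ --- though the discrepancy is absorbed by an even diagonal rescaling along each super Jordan block, so the $G_{\bar 0}$-orbit claim survives; and your final paragraph about matching $[H,e]=e$ against the Section~3 bookkeeping is dead weight on your route, as it describes the paper's argument, which your representation-theoretic necessity renders unnecessary.
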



\vspace{1pt}

\subsection{Construction of $\mathfrak{osp}(1|2)$-subalgebra}

Let $f := [F, e]$ be another odd element of $\mathfrak{gl}(m|n)$. We now simplify the conditions under which the subalgebra $\langle e, f, H, E, F\rangle$ is isomorphic to $\mathfrak{osp}(1|2)$ by performing a series of explicit calculations. Note that $\mathfrak{osp}(1|2)$ is generated by even elements $\{H, E, F\}$ and odd elements $\{e, f\}$, satisfying the following defining relations:
\begin{align*}
    &[H, E] = 2E, \ [H, F] = -2F, \ [E, F] = H,\\
    &[H, e] = e, \ [H, f] = -f,\\
    &[E, f] = e, \ [E, e] = 0,\\
    &[F, e] = f, \ [F, f] = 0,\\
    &[e, e] = 2E, \ [f, f] = -2F, \ [e, f] = -H
\end{align*}

\begin{lemma}\label{lem;lemma4}
    Given even generators $\{H, E, F\}$ that forms $\mathfrak{sl}_2$-subalgebra and odd generators $\{e, f\}$ with $E = e^2$ and $f = [F,e]$,
    \[
    \langle e, f, H, E, F\rangle \simeq \mathfrak{osp}(1|2) \quad \text{if and only if} \quad [H,e] = e \quad \text{and} \quad [F,f] = 0.
    \]
\end{lemma}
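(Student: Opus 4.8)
The plan is to prove both implications using the super Jacobi identity together with the $\mathbb{Z}_2$-graded representation theory of $\mathfrak{sl}_2$. Throughout, write $\mathfrak{s} = \langle e, f, H, E, F\rangle$, and I would first isolate the relations that hold \emph{unconditionally}, independent of the two hypotheses. Since $e$ is odd and $E = e^2$, the supercommutator gives $[e,e] = 2e^2 = 2E$; since $E$ is even it commutes with $e$, so $[E,e] = e^3 - e^3 = 0$; the relation $[F,e] = f$ is the definition of $f$; and $[H,E]=2E$, $[H,F]=-2F$, $[E,F]=H$ hold because $\{H,E,F\}$ is an $\mathfrak{sl}_2$-triple. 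A further relation also needs no hypothesis: applying super Jacobi to $[e,[F,e]]$ with $[e,F] = -f$ and $[F,[e,e]] = 2[F,E] = -2H$ yields $[e,f] = -[e,f] - 2H$, hence $[e,f] = -H$.

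For the direction ($\Leftarrow$), I would assume $[H,e] = e$ and $[F,f] = 0$ and verify each remaining defining relation by reducing a double bracket of the form $[X,[F,e]]$ via super Jacobi to brackets already known. Concretely, $[H,f] = [[H,F],e] + [F,[H,e]] = -2f + f = -f$; next $[E,f] = [[E,F],e] + [F,[E,e]] = [H,e] + 0 = e$; and finally $[f,f] = [[f,F],e] + [F,[f,e]] = 0 + [F,-H] = -2F$, where $[f,F] = -[F,f] = 0$ is the place the second hypothesis enters and $[f,e]=[e,f]=-H$ is used. Once all twelve relations of $\mathfrak{osp}(1|2)$ hold, $\operatorname{span}\{H,E,F,e,f\}$ is bracket-closed, so sending the abstract generators of $\mathfrak{osp}(1|2)$ to these elements defines a surjective homomorphism; since $\mathfrak{osp}(1|2)$ is simple and $e \neq 0$, its kernel is trivial and the map is an isomorphism onto $\mathfrak{s}$.

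For the direction ($\Rightarrow$), I would pass to the graded pieces. An isomorphism $\mathfrak{s} \cong \mathfrak{osp}(1|2)$ is even, so $\mathfrak{s}_{\bar 0} \cong \mathfrak{sl}_2$ is three-dimensional; as it contains the $\mathfrak{sl}_2$-triple $\{H,E,F\}$, which already spans a three-dimensional subalgebra, we get $\mathfrak{s}_{\bar 0} = \operatorname{span}\{H,E,F\}$. The odd part $\mathfrak{s}_{\bar 1}$ is the two-dimensional irreducible $\mathfrak{sl}_2$-module, so $\operatorname{ad} H$ acts on it with eigenvalues $\pm 1$; write $\mathfrak{s}_{\bar 1} = \mathbb{C}u_+ \oplus \mathbb{C}u_-$ with $[H,u_\pm] = \pm u_\pm$ and expand $e = \alpha u_+ + \beta u_-$. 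By weight reasons $[u_+,u_+] = c_+ E$, $[u_-,u_-] = c_- F$, $[u_+,u_-] = c_0 H$, and the defining relations of $\mathfrak{osp}(1|2)$ show $c_+, c_- \neq 0$. Comparing $[e,e] = \alpha^2 c_+ E + 2\alpha\beta c_0 H + \beta^2 c_- F$ with the known value $2E$ forces $\beta^2 c_- = 0$, hence $\beta = 0$; thus $e = \alpha u_+$ and $[H,e] = e$. Then $f = [F,e]$ has $H$-weight $-1$, so $f \in \mathbb{C}u_-$, and $[F,f]$ would have weight $-3$, which does not occur in $\mathfrak{s}_{\bar 1}$; therefore $[F,f] = 0$.

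The main obstacle is the reverse direction: one must resist reading the two desired relations directly off the abstract isomorphism, since it need not identify $e,f,H,E,F$ with the standard generators, and instead argue intrinsically that $e$ is forced to be a highest-weight vector. The decisive point is that the Lie-superalgebra relation $[e,e] = 2E$ — rather than the associative square $e^2$ — must be combined with the weight decomposition of $\mathfrak{s}_{\bar 1}$, and that the nonvanishing of the structure constants $c_\pm$ (a structural feature of $\mathfrak{osp}(1|2)$, where the odd part generates the even part) is exactly what kills the lowest-weight component of $e$.
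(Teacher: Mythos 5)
Your proof is correct, and in the ``if'' direction it is essentially identical to the paper's: the same Jacobi-identity reductions $[H,f]=[[H,F],e]+[F,[H,e]]=-f$, $[E,f]=[[E,F],e]+[F,[E,e]]=[H,e]=e$, the self-referential computation $[e,f]=[[e,F],e]+[F,[e,e]]=-[e,f]-2H$ giving $[e,f]=-H$, and $[f,f]=[[f,F],e]+[F,[f,e]]=-2F$, with $[e,e]=2E$ and $[E,e]=0$ immediate from $E=e^2$. (You correctly observe that $[e,f]=-H$ needs neither hypothesis; the paper derives it inside the ``if'' part but likewise uses neither.) The genuine divergence is the ``only if'' direction: the paper disposes of it with the single sentence ``It suffices to prove the `if' part,'' implicitly reading the isomorphism $\langle e,f,H,E,F\rangle\simeq\mathfrak{osp}(1|2)$ as the one matching these five elements with the standard generators, so that $[H,e]=e$ and $[F,f]=0$ sit among the listed defining relations. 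You instead allow an arbitrary (automatically even) isomorphism and show the two relations are forced: $\mathfrak{s}_{\bar 0}=\operatorname{span}\{H,E,F\}$ by dimension count, $\mathfrak{s}_{\bar 1}$ is the two-dimensional irreducible $\mathfrak{sl}_2$-module with $\operatorname{ad}H$-weights $\pm1$, the constants $c_\pm$ are nonzero because $[\mathfrak{s}_{\bar 1},\mathfrak{s}_{\bar 1}]=\mathfrak{s}_{\bar 0}$ is three-dimensional (simplicity of $\mathfrak{osp}(1|2)$), and comparing $[e,e]=\alpha^2c_+E+2\alpha\beta c_0H+\beta^2c_-F$ with $2E$ kills the lowest-weight component of $e$, whence $[H,e]=e$ and $[F,f]$, having weight $-3$, vanishes. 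This buys a strictly stronger, convention-free statement of the equivalence; moreover your closing observation in the ``if'' direction (the relations define a homomorphism from $\mathfrak{osp}(1|2)$ whose kernel is trivial by simplicity, since $e\neq 0$ follows from $E=e^2\neq 0$) supplies a step the paper leaves tacit, namely that verifying the twelve relations really yields an isomorphism onto $\langle e,f,H,E,F\rangle$. The cost is the extra $\mathfrak{sl}_2$-weight bookkeeping, which the paper's implicit reading of the lemma avoids entirely.
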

\begin{proof}
    It suffices to prove the ``if'' part. We verify the relations listed above using the Jacobi identity. Indeed,
    \begin{align*}
        [H,f] &= [[H,F],e] + [F,[H,e]] = -2[F,e] + [F,e] = -f, \\
        [E,e] &= Ee - eE = 0, \\
        [E,f] &= [[E,F],e] + [F,[E,e]] = [H,e] = e, \\
        [e,e] &= 2 e^2 = 2E, \\
        [e,f] &= [[e,F],e] + [F,[e,e]] = -[e,f] - 2[E,F] \quad \text{implies} \quad [e,f] = -H, \\
        [f,f] &= [[f,F],e] + [F,[f,e]] = -[F,H] = -2F.
    \end{align*}
\end{proof}

    Given an odd nilpotent element $e \in \mathfrak{gl}(m|n)$ of the form (\ref{eqn;eqn4}), we denote certain blocks $J_{i_p}$ and $J_{j_q}$ by $J_i \in \{J_1, \ldots, J_t\}$\footnote{We distinguish $J_i$ from $J_{i'}$ whenever $k_1 + \cdots +k_i \neq k_1+\cdots + k_{i'}$, even though $k_i = k_{i'}$.} when $k_1 + \cdots + k_i = i_p$ and $1+k_1 + \cdots + k_{i-1} = j_q$, respectively.
    
\begin{lemma}\label{lem;lemma5}
    Let $e \in \mathfrak{gl}(m|n)$ be an odd nilpotent element of the form (\ref{eqn;eqn4}), with $E = e^2$, and let $\{H, E, F\}$ be an associated $\mathfrak{sl}_2$-triple for $E$. Define $f := [F,e]$. Then,
    \begin{align*}
\langle e, f, H, E, F \rangle &\simeq \mathfrak{osp}(1|2) \quad \text{if and only if} \\
\{J_1, \ldots, J_t\} &= \{J_{i_1}, \ldots, J_{i_{r_1}}\} 
\; \sqcup \; \{J_{j_1}, \ldots, J_{j_{r_2}}\} 
\quad \text{and} \quad s = 0.
\end{align*}
\end{lemma}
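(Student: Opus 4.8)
The plan is to reduce the statement to a single matrix identity via Lemma \ref{lem;lemma4}, which asserts that $\langle e, f, H, E, F\rangle \simeq \mathfrak{osp}(1|2)$ if and only if $[H,e] = e$ and $[F,f] = 0$. Since $H$ and $F$ are even (block-diagonal) while $e$ is odd (block-anti-diagonal), I would expand these supercommutators blockwise. Writing $H = \mathrm{diag}(H_1, H_2)$ and $F = \mathrm{diag}(F_1, F_2)$ as in Proposition \ref{prop;proposition2} and $e = \left(\begin{smallmatrix} 0 & e^+ \\ e^- & 0\end{smallmatrix}\right)$ as in (\ref{eqn;eqn4}), the condition $[H,e] = e$ becomes the pair
\[ H_1 e^+ - e^+ H_2 = e^+, \qquad H_2 e^- - e^- H_1 = e^-, \]
and, after computing $f = [F,e] = \left(\begin{smallmatrix} 0 & F_1 e^+ - e^+ F_2 \\ F_2 e^- - e^- F_1 & 0\end{smallmatrix}\right)$, the condition $[F,f] = 0$ becomes an analogous pair with $H_i$ replaced by $F_i$ and $e^\pm$ by the blocks of $f$. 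The whole proof then amounts to reading off exactly when these identities hold from the explicit formulas of Proposition \ref{prop;proposition2}.

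For the first identity I would exploit that $H_i = H_i^D \pm H_i^I$ splits into a diagonal part built from the \emph{same} blocks $D_{k_i}$ on the $m$- and $n$-sides and a correction $H_i^I$ that is the indicator of the selected Jordan coordinates, carried with opposite signs on the two sides. Because $e^+ = \left(\begin{smallmatrix} I_r & 0 \\ 0 & 0\end{smallmatrix}\right)$ merely identifies the first $r$ coordinates, the $D$-parts cancel in $H_1 e^+ - e^+ H_2$, and the surviving contribution on a Jordan block $J_i$ is precisely $[\,J_i \text{ is } C\text{-selected}\,] + [\,J_i \text{ is } R\text{-selected}\,]$. Hence $H_1 e^+ - e^+ H_2 = e^+$ holds if and only if every Jordan block is selected by exactly one of $C_{r_1}$ or $R_{r_2}$, i.e. $\{J_1,\dots,J_t\} = \{J_{i_1},\dots,J_{i_{r_1}}\}\sqcup\{J_{j_1},\dots,J_{j_{r_2}}\}$. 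I would then check $H_2 e^- - e^- H_1 = e^-$ block by block on $e^-$: on the Jordan blocks it reproduces the same selection condition (using $[D_{k_i}, J_i] = 2J_i$ from Proposition \ref{prop;proposition1}), and on the $C_{r_1}$- and $R_{r_2}$-couplings it is automatically satisfied once selection holds, via the identities $d_p + 1 = -k_{i_p}$ and $d_q' - 1 = k_{j_q}$. The decisive point is the $I_s$-block of $e^-$: its row and column coordinates are isolated in $E = e^2$ (they form zero rows and zero columns), so both $H_1$ and $H_2$ vanish on them, and this block contributes the equation $0 - 0 = 1$, impossible unless $s = 0$. This is exactly where $s = 0$ is forced, and it arises from the asymmetry that $e^+$ has no matching identity block.

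Having established $[H,e] = e \iff (\text{selection})\wedge(s=0)$, it remains to dispose of $[F,f] = 0$. Here I would use the clean route: since $E = e^2$ we always have $[E,e] = e^3 - e^3 = 0$, so once $[H,e] = e$ holds, $e$ is a highest-weight vector of $H$-weight $1$ for the $\mathfrak{sl}_2$-triple $\{H,E,F\}$ acting by the adjoint action on the finite-dimensional module $\mathfrak{g}_{\bar 1}$. The cyclic submodule it generates is then the two-dimensional irreducible $V(1) = \langle e, [F,e]\rangle$, whose weights are only $\pm 1$, whence $[F,f] = (\mathrm{ad}\,F)^2 e$ lies in the weight $-3$ space and vanishes. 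Thus the second condition of Lemma \ref{lem;lemma4} imposes nothing beyond $[H,e] = e$, and combining with the previous paragraph gives the lemma. (Alternatively, once the selection-plus-$s=0$ condition holds, $[F,f]=0$ can be confirmed directly by the same blockwise computation applied to the blocks of $f$.)

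The main obstacle I anticipate is the index bookkeeping in the second identity: one must track which last and first rows $i_p, j_q$ of the Jordan blocks carry the $C_{r_1}$- and $R_{r_2}$-entries, match them to the correct diagonal entries of $D_{k_i}$ and to the sign of the corrections $\pm H_i^I$, and verify that the couplings cancel to reproduce exactly the entries of $e^-$. The subtlest conceptual step is the $I_s$-analysis, namely proving rigorously that the $I_s$-coordinates are isolated in $E = e^2$ so that $H_1$ and $H_2$ both annihilate them; this is what converts the $e^+/e^-$ asymmetry into the clean requirement $s = 0$. Everything else is a finite, if lengthy, verification driven by the explicit formulas of Proposition \ref{prop;proposition2}.
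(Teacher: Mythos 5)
Your proposal is correct, and its first half coincides with the paper's own argument: the paper likewise reduces via Lemma \ref{lem;lemma4} and proves (as its Lemma \ref{lem;lemma6}) that $[H,e]=e$ holds if and only if $\{J_1,\ldots,J_t\}=\{J_{i_1},\ldots,J_{i_{r_1}}\}\sqcup\{J_{j_1},\ldots,J_{j_{r_2}}\}$ and $s=0$, by exactly your splitting $H=H_D+H_I$: the $D$-parts cancel (upper-right block) or produce $2e^-$ (lower-left), the indicator parts yield the exactly-one-selection condition $I_{k_{i_\bullet}}+I_{k_{j_\bullet}}=I_r$, and the $I_s$-block of $e^-$ meets zero diagonal entries of $H_1$ and $H_2$ (immediate from the trailing zero blocks in Proposition \ref{prop;proposition2}, so your ``isolated coordinates'' worry is a non-issue), forcing $s=0$. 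Where you genuinely diverge is the second condition of Lemma \ref{lem;lemma4}: the paper's Lemma \ref{lem;lemma7} establishes $[H,e]=e\Rightarrow[F,f]=0$ by computing $f=[F,e]$ and $[F,f]$ entry by entry, running the four-case analysis \ref{case;case1}--\ref{case;case4} on which Jordan blocks are $C$- or $R$-selected and checking cancellations against the correction terms $F_1,F_2$ --- several pages of bookkeeping. Your replacement --- $[E,e]=Ee-eE=e^3-e^3=0$ holds unconditionally since $E=e^2$ is even, so $[H,e]=e$ makes $e$ a primitive vector of weight $1$ in the finite-dimensional $\mathfrak{sl}_2$-module $\mathfrak{g}_{\bar{1}}$, whence $(\mathrm{ad}\,F)^2e=[F,f]=0$ --- is valid, far shorter, and more robust: it never uses the explicit shape of $F$, so it proves the implication for any $\mathfrak{sl}_2$-triple through $E$, and it is essentially the mechanism behind the Entova-Aizenbud--Serganova criterion cited in the introduction. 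One phrasing caution: asserting that the cyclic submodule ``is $V(1)$'' is circular as stated (you need $(\mathrm{ad}\,F)^2e=0$ to know it is two-dimensional); instead quote the standard primitive-vector lemma directly ($Ev=0$, $Hv=\lambda v$ with $\lambda\in\mathbb{Z}_{\geq 0}$ in a finite-dimensional module forces $F^{\lambda+1}v=0$, here with $\lambda=1$). What the paper's computation buys in exchange is an explicit formula for $f$ and an independent necessary-and-sufficient criterion for $[F,f]=0$ (condition (\ref{eqn;eqn12}) plus the per-block cases), but none of that extra information is needed to conclude Lemma \ref{lem;lemma5}.
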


To prove this, we introduce the following lemmas.
\begin{lemma}\label{lem;lemma6}
    Under the same assumptions as in Lemma \ref{lem;lemma5},
    \begin{align*}
    [H,e] &= e \quad \text{if and only if}\\
    \{J_1, \ldots, J_t\} &= \{J_{i_1}, \ldots, J_{i_{r_1}}\} \; \sqcup \; \{J_{j_1}, \ldots, J_{j_{r_2}}\} 
    \quad \text{and} \quad s = 0.
    \end{align*}
\end{lemma}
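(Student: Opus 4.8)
The plan is to exploit the fact that the semisimple element $H$ of Proposition \ref{prop;proposition2} is diagonal, which turns $[H,e]=e$ into a finite list of scalar conditions on the entries of $e$. Since $H$ is even, $[H,e]=He-eH$, and for a diagonal $H$ one has $[H,e]_{pq}=(H_{pp}-H_{qq})e_{pq}$ for all indices $p,q$. Thus $[H,e]=e$ holds if and only if $H_{pp}-H_{qq}=1$ for every pair $(p,q)$ with $e_{pq}\neq0$; in other words, $e$ must raise the $H$-eigenvalue by exactly $1$ along each of its nonzero entries. The whole argument is then the bookkeeping of these eigenvalue jumps.

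First I would read off the diagonal of $H$ from Lemmas \ref{lem;lemma2} and \ref{lem;lemma3}. Write $\chi_i^+\in\{0,1\}$ for the indicator that $J_i$ is selected by $C_{r_1}$ (that is, $k_1+\cdots+k_i\in\{i_1,\dots,i_{r_1}\}$) and $\chi_i^-\in\{0,1\}$ for the indicator that $J_i$ is selected by $R_{r_2}$. Using that the $a$-th diagonal entry of $D_{k_i}$ is $k_i+1-2a$, the $a$-th even basis vector of $J_i$ then has eigenvalue $(k_i+1-2a)+\chi_i^+$ and the $a$-th odd basis vector has eigenvalue $(k_i+1-2a)-\chi_i^-$; the even $C_{r_1}$-column attached to a selected block carries eigenvalue $d_p+1=-k_{i_p}$, the odd $R_{r_2}$-row attached to a selected block carries $d_q'-1=k_{j_q}$, and every vector in the $I_s$ part (and in the trailing zero blocks) carries eigenvalue $0$.

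Next I would enumerate the nonzero entries of $e=\left(\begin{smallmatrix}0&e^+\\ e^-&0\end{smallmatrix}\right)$ and impose $H_{pp}-H_{qq}=1$ on each. They fall into five families: the identity block of $e^+$, matching the $a$-th odd vector of $J_i$ to the $a$-th even vector of $J_i$; the superdiagonal of each $J_i$ inside $e^-$; the entries of $C_{r_1}$; the entries of $R_{r_2}$; and the entries of $I_s$. A short computation with the eigenvalues above shows that the $e^+$ family already forces $\chi_i^++\chi_i^-=1$ for every $i$, since there the jump equals $[(k_i+1-2a)+\chi_i^+]-[(k_i+1-2a)-\chi_i^-]=\chi_i^++\chi_i^-$; the three remaining Jordan, $C_{r_1}$ and $R_{r_2}$ families impose nothing beyond this, the $(k_i+1-2a)$ terms cancelling while the $\pm1$ shifts from $H^I$ and the extension values $-k_{i_p}$, $k_{j_q}$ leave exactly $\chi_i^++\chi_i^-=1$. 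The $I_s$ family, by contrast, requires $0-0=1$, which is impossible, so it forces $s=0$. Since ``$\chi_i^++\chi_i^-=1$ for all $i$'' is precisely the assertion $\{J_1,\dots,J_t\}=\{J_{i_1},\dots,J_{i_{r_1}}\}\sqcup\{J_{j_1},\dots,J_{j_{r_2}}\}$, both implications follow simultaneously.

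The step I expect to be the main obstacle is this last one: keeping the index conventions consistent across the extension families. The $C_{r_1}$ entry attaches the even $(-k_{i_p})$-vector to the bottom odd vector of $J_i$, and the $R_{r_2}$ entry attaches the top even vector of $J_i$ to the odd $k_{j_q}$-vector, so these join a Jordan string of one parity to a single vector of the opposite parity; verifying that the jump is exactly $+1$ there is where the precise values of $d_p$, $d_q'$ and the $H^I$-shifts must be tracked without a sign error. One must also be careful to include the lone $I_s$ family, since it is solely responsible for the condition $s=0$; omitting it would yield a strictly weaker criterion.
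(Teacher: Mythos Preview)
Your argument is correct and is essentially the paper's computation, repackaged. Both proofs compute $[H,e]$ directly and compare with $e$; the paper does this via the splitting $H=H_D+H_I$ and explicit block-matrix products, while you observe at the outset that $H$ is diagonal, so $[H,e]=e$ becomes the entrywise weight condition $H_{pp}-H_{qq}=1$ on the support of $e$, and your indicator notation $\chi_i^{\pm}$ makes the disjoint-union condition $\chi_i^{+}+\chi_i^{-}=1$ appear immediately from the $e^+$ family.
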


\begin{lemma}\label{lem;lemma7}
    Under the same assumptions as in Lemma \ref{lem;lemma5},
    \[
    [H,e] = e \quad \text{implies} \quad [F,f] = 0.
    \]
\end{lemma}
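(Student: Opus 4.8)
The plan is to recognize the statement as an instance of finite-dimensional $\mathfrak{sl}_2$-representation theory applied to the adjoint action of the triple $\{H,E,F\}$ on $\mathfrak{g}_{\bar 1}$. Since $H,E,F$ are even, the operators $\mathrm{ad}(H),\mathrm{ad}(E),\mathrm{ad}(F)$ preserve the parity decomposition $\mathfrak{g}=\mathfrak{g}_{\bar 0}\oplus\mathfrak{g}_{\bar 1}$, so $\mathfrak{g}_{\bar 1}$ becomes a finite-dimensional module over the $\mathfrak{sl}_2$-subalgebra $\langle H,E,F\rangle$, and $e\in\mathfrak{g}_{\bar 1}$. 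The quantity to be computed is $[F,f]=[F,[F,e]]=\mathrm{ad}(F)^2(e)$, the result of applying the lowering operator twice to $e$, and the whole claim will follow once I show that $e$ is a highest weight vector of $\mathrm{ad}(H)$-weight exactly $1$.

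First I would verify the highest weight condition $[E,e]=0$, which holds unconditionally because $E=e^2$: indeed $[E,e]=e^2e-ee^2=0$, and equivalently one obtains $[[e,e],e]=0$ by applying the super Jacobi identity and skew-supersymmetry to the three odd copies of $e$. Thus $e$ is annihilated by $\mathrm{ad}(E)$. The hypothesis $[H,e]=e$ then says precisely that $e$ is an $\mathrm{ad}(H)$-eigenvector of weight $1$. By the classification of finite-dimensional $\mathfrak{sl}_2$-modules, a highest weight vector of weight $n\in\mathbb{Z}_{\geq 0}$ generates an irreducible submodule of dimension $n+1$, on which $\mathrm{ad}(F)^{n+1}$ vanishes. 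Taking $n=1$, the submodule generated by $e$ is the two-dimensional irreducible with basis $\{e,\ \mathrm{ad}(F)(e)\}=\{e,f\}$, and consequently $\mathrm{ad}(F)^2(e)=0$.

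Combining these steps gives $[F,f]=\mathrm{ad}(F)(f)=\mathrm{ad}(F)^2(e)=0$, as desired. The only genuine subtlety—and the point where the hypothesis is used essentially—is that the weight must be exactly $1$: the same argument with a highest weight vector of weight $n\geq 2$ would give $\mathrm{ad}(F)^2(e)\neq 0$, so it is precisely $[H,e]=e$ (equivalently, the conclusion of Lemma \ref{lem;lemma6}) that forces the two-dimensional module and hence the vanishing of $\mathrm{ad}(F)^2(e)$. For completeness I would also record the purely computational alternative, in which one expands $f=[F,e]$ in the block coordinates of (\ref{eqn;eqn4}) and (\ref{eqn;eqn8}) and checks $[F,[F,e]]=0$ directly; this is more laborious but confirms the representation-theoretic argument and makes the role of the block structure of $F$ transparent.
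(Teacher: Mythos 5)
Your proof is correct, and it takes a genuinely different---and far shorter---route than the paper's. The paper proves Lemma \ref{lem;lemma7} by direct matrix computation: starting from the normal form (\ref{eqn;eqn4}) and the explicit triple of Proposition \ref{prop;proposition2}, it computes $f=[F,e]$ blockwise, expands $[F,f]$ as $Ff^*-f^*F$ plus the correction terms $F_1,F_2$, and runs the four-case analysis \ref{case;case1}--\ref{case;case4} to determine exactly which blocks vanish, concluding by checking that the combinatorial condition of Lemma \ref{lem;lemma6} (equivalent to $[H,e]=e$) places every block in a vanishing case. You bypass all of this with standard $\mathfrak{sl}_2$-representation theory: since $H,E,F$ are even, the operators $\mathrm{ad}(H),\mathrm{ad}(E),\mathrm{ad}(F)$ make the finite-dimensional space $\mathfrak{g}_{\bar 1}$ an $\mathfrak{sl}_2$-module (the super Jacobi identity gives $\mathrm{ad}[x,y]=[\mathrm{ad}\,x,\mathrm{ad}\,y]$ for even $x,y$); the highest weight condition $[E,e]=e^2e-ee^2=0$ holds unconditionally because $E=e^2$ is even; and the hypothesis $[H,e]=e$ then makes $e$ a highest weight vector of weight exactly $1$, so $\mathrm{ad}(F)^2e=[F,[F,e]]=[F,f]=0$ (if $N\geq 1$ is minimal with $\mathrm{ad}(F)^Ne=0$, then $0=\mathrm{ad}(E)\mathrm{ad}(F)^Ne=N(2-N)\,\mathrm{ad}(F)^{N-1}e$ forces $N=2$; the degenerate case $e=0$ is trivial). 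Every hypothesis you invoke is available under the assumptions of Lemma \ref{lem;lemma5}, and you correctly identify that the weight being exactly $1$ is where the hypothesis bites. Your argument is moreover strictly more general: it uses neither the Jenkins--Nakano normal form nor the particular $H$ and $F$ constructed in Section 3, only that $\{H,E,F\}$ is an $\mathfrak{sl}_2$-triple with nilpositive element $E=e^2$; this is essentially the representation-theoretic mechanism behind the approach of \cite{ES} cited in the introduction. What the paper's computation buys in exchange is finer information---an explicit necessary and sufficient criterion for $[F,f]=0$, namely condition (\ref{eqn;eqn12}) together with the case analysis---but since only the bare implication stated in the lemma is used in proving Lemma \ref{lem;lemma5} and Theorem \ref{thm;theorem1}, your proof would serve the paper's purposes equally well.
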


\begin{proof}[Proof of the Lemma \ref{lem;lemma6}]
    Let $H_D$ and $H_I$ denote the first and second terms of (\ref{eqn;eqn8}), respectively, so that $H = H_D + H_I$. Then we have
    \[
        [H,e] = (H_D + H_I)e - e(H_D + H_I) = (H_De-eH_D) + (H_Ie-eH_I).
    \]
    Let $A = H_De-eH_D$ and $B = H_Ie-eH_I$. Then we get
    \begin{align*}
        A &= \left(
\begin{array}{c|c}
     & \begin{array}{ccc|cc}
        D_{k_1} & & & \quad & \quad \\
        & \ddots & & & \\
        & & D_{k_t} & & \\
        \hline
        & & & & \\
        & & & & \\
    \end{array}\\
    \hline
    \begin{array}{ccc|cc}
        D_{k_1}J_1 & & & | & \\
        & \ddots & & \hat{C}_{r_1} & 0 \\
        & & D_{k_t}J_t & | & \\
        \hline
        - & \hat{R}_{r_2} & - & & \\
        & 0 & & & \\
    \end{array} & 
\end{array}
\right)\\
    &- \left(
\begin{array}{c|c}
     & \begin{array}{ccc|cc}
        D_{k_1} & & & \quad & \quad \\
        & \ddots & & & \\
        & & D_{k_t} & & \\
        \hline
        & & & & \\
        & & & & \\
    \end{array}\\
    \hline
    \begin{array}{ccc|cc}
        J_1D_{k_1} & & & | & \\
        & \ddots & & \hat{\hat{C}}_{r_1} & 0 \\
        & & J_tD_{k_t} & | & \\
        \hline
        - & \hat{\hat{R}}_{r_2} & - & & \\
        & 0 & & & \\
    \end{array} & 
\end{array}
\right),
    \end{align*}
    where for each $p = 1, \ldots, r_1$ and $q = 1, \ldots, r_2$,
    \begin{align*}
        \hat{C}_{r_1} &= \{\hat{e}_{i_1}, \ldots, \hat{e}_{i_{r_1}}\}, \quad \text{each} \quad \hat{e}_{i_p} = (-k_{i_p}+1)e_{i_p},\\
        \hat{R}_{r_2} &= \{\hat{e}_{j_1}, \ldots, \hat{e}_{j_{r_2}}\}, \quad \text{each} \quad \hat{e}_{j_q} = (k_{j_q}+1)e_{j_q},\\
        \hat{\hat{C}}_{r_1} &= \{\hat{\hat{e}}_{i_1}, \ldots, \hat{\hat{e}}_{i_{r_1}}\}, \quad \text{each} \quad \hat{\hat{e}}_{i_p} = (-k_{i_p}-1)e_{i_p},\\
        \hat{\hat{R}}_{r_2} &= \{\hat{\hat{e}}_{j_1}, \ldots, \hat{\hat{e}}_{j_{r_2}}\}, \quad \text{each} \quad \hat{\hat{e}}_{j_q} = (k_{j_q}-1)e_{j_q}.
    \end{align*}
    Therefore, $\hat{C}_{r_1} - \hat{\hat{C}}_{r_1} = 2C_{r_1}$ and $\hat{R}_{r_2} - \hat{\hat{R}}_{r_2} = 2R_{r_2}$.\\
    In addition, we have $D_{k_i}J_i - J_iD_{k_i} = 2J_i$ for each $i = 1, \ldots, t.$\\
    Hence,
    \[
    A = 2\left(
\begin{array}{c|c}
     & \begin{array}{ccccc}
        \quad & \quad & \quad & \quad & \quad \\
        & & & & \\
        & & & & \\
        & & & & \\
        & & & & \\
    \end{array}\\
    \hline
    \begin{array}{ccc|cc}
        J_1 & & & | & \\
        & \ddots & & C_{r_1} & 0 \\
        & & J_t & | & \\
        \hline
        - & R_{r_2} & - & & \\
        & 0 & & & \\
    \end{array} & 
\end{array}
\right)
    \]
    Moreover,
    \begin{align*}
        B &= \left(
\begin{array}{c|c}
     & \begin{array}{ccc|cc}
        I_{k_{i_1}} & & & \quad & \quad \\
        & \ddots & & & \\
        & & I_{k_{i_{r_1}}} & & \\
        \hline
        & & & & \\
        & & & & \\
    \end{array}\\
    \hline
    \begin{array}{ccc|cc}
        -J_{j_1} & & & | & \\
        & \ddots & & -C_{r_1}^* & 0 \\
        & & -J_{j_{r_2}} & | & \\
        \hline
        - & -R_{r_2} & - & & \\
        & 0 & & & \\
    \end{array} & 
\end{array}
\right)\\
    &+ \left(
\begin{array}{c|c}
     & \begin{array}{ccc|cc}
        I_{k_{j_1}} & & & \quad & \quad \\
        & \ddots & & & \\
        & & I_{k_{j_{r_2}}} & & \\
        \hline
        & & & & \\
        & & & & \\
    \end{array}\\
    \hline
    \begin{array}{ccc|cc}
        -J_{i_1} & & & | & \\
        & \ddots & & -C_{r_1} & 0 \\
        & & -J_{i_{r_1}} & | & \\
        \hline
        - & -R_{r_2}^* & - & & \\
        & 0 & & & \\
    \end{array} & 
\end{array}
\right),
    \end{align*}
    where
    \[
        C_{r_1}^* = \begin{pmatrix}
            I_{k_{j_1}} & & \\
        & \ddots & \\
        & & I_{k_{j_{r_2}}}
        \end{pmatrix}C_{r_1}, \quad R_{r_2}^* = R_{r_2}\begin{pmatrix}
            I_{k_{i_1}} & & \\
        & \ddots & \\
        & & I_{k_{i_{r_1}}}
        \end{pmatrix}
    \]
    Therefore,
    \begin{align*}
        [H,e] = A+B = e \quad &\text{if and only if}\\
        \small\begin{pmatrix}
            I_{k_{i_1}} & & \\
        & \ddots & \\
        & & I_{k_{i_{r_1}}}
        \end{pmatrix} &+ \small\begin{pmatrix}
            I_{k_{j_1}} & & \\
        & \ddots & \\
        & & I_{k_{j_{r_2}}}
        \end{pmatrix} = I_r,
        \; C_{r_1}^* = R_{r_2}^* = 0 \; \text{and} \; s = 0. \\
        &\text{if and only if}\\
        \small
        \begin{pmatrix}
            I_{k_{i_1}} & & \\
        & \ddots & \\
        & & I_{k_{i_{r_1}}}
        \end{pmatrix} &+ \small\begin{pmatrix}
            I_{k_{j_1}} & & \\
        & \ddots & \\
        & & I_{k_{j_{r_2}}}
        \end{pmatrix} = I_r \; \text{and} \; s = 0.\\
        &\text{if and only if}\\
        \{J_1, \ldots, J_t\} &= \{J_{i_1}, \ldots, J_{i_{r_1}}\} \;\sqcup\; \{J_{j_1}, \ldots, J_{j_{r_2}}\} \; \text{and} \; s = 0.
    \end{align*}
    \end{proof}

    \begin{proof}[Proof of the Lemma \ref{lem;lemma7}]

    Let us compute $f$.
    \[
    f = [F, e] = Fe - eF.
    \]
    Here, we have
    \begin{align}
        Fe &= \left(
\begin{array}{c|c}
     & \begin{array}{ccc|c}
        \tilde{J_1} & & & \quad \\
        & \ddots & & \\
        & & \tilde{J_t} & \\
        \hline
        - & \tilde{C}_{r_1} & - & \\
        & 0 & & \\
    \end{array}\\
    \hline
    \begin{array}{ccc|c}
        \bar{J_1}J_1 & & & \quad \\
        & \ddots & & \\
        & & \bar{J_t}J_t & \\
        \hline
        & & & \\
    \end{array} & 
\end{array}
\right) \label{eqn;eqn9}\\
    &+ \left(
\begin{array}{c|c}
     & \begin{array}{cccc}
        \quad & \quad & \quad & \quad \\
        & & & \\
        & & & \\
        & & & \\
    \end{array}\\
    \hline
    \begin{array}{ccc|c}
        J_{j_1}' & & & \quad \\
        & \ddots & & \\
        & & J_{j_{r_2}}' & \\
        \hline
        & & & \\
    \end{array} & 
\end{array}
\right), \nonumber
    \end{align}
    where for each $q = 1, \ldots, r_2$,
    \[
    J_{j_{q}}' = \begin{pmatrix}
        k_{j_q} & & & \\
        & 0 & & \\
        & & \ddots & \\
        & & & 0
    \end{pmatrix}.
    \]
    Moreover, we have
    \begin{align}
        eF &= \left(
\begin{array}{c|c}
     & \begin{array}{ccc|cc}
        \bar{J_1} & & & | & \\
        & \ddots & & \bar{R}_{r_2} & 0 \\
        & & \bar{J_t} & | & \\
        \hline
        & & & & \\
    \end{array}\\
    \hline
    \begin{array}{ccc|c}
        J_1\tilde{J_1} & & & \quad \\
        & \ddots & & \\
        & & J_t\tilde{J_t} & \\
        \hline
        & & & \\
    \end{array} & 
\end{array}
\right) \label{eqn;eqn10}\\
    &+ \left(
\begin{array}{c|c}
     & \begin{array}{cccc}
        \quad & \quad & \quad & \quad \\
        & & & \\
        & & & \\
        & & & \\
    \end{array}\\
    \hline
    \begin{array}{ccc|c}
        J_{i_1}'' & & & \quad \\
        & \ddots & & \\
        & & J_{i_{r_1}}'' & \\
        \hline
        & & & \\
    \end{array} & 
\end{array}
\right), \nonumber
    \end{align}
    where for each $p = 1, \ldots, r_1$,
    \[
    J_{i_{p}}'' = \begin{pmatrix}
        0 & & & \\
        & \ddots & & \\
        & & 0 & \\
        & & & k_{i_p}
    \end{pmatrix}.
    \]
    Hence, we have
    \begin{align}
    f &= \scriptsize \left(
\begin{array}{c|c}
     & \begin{array}{ccc|cc}
        \tilde{J_1} - \bar{J_1} & & & | & \\
        & \ddots & & -\bar{R}_{r_2} & 0 \\
        & & \tilde{J_t} - \bar{J_t} & | & \\
        \hline
        - & \tilde{C}_{r_1} & - & & \\
        & 0 & & &
    \end{array}\\
    \hline
    \begin{array}{ccc|c}
        \bar{J_1}J_1 - J_1\tilde{J_1} & & & \quad \\
        & \ddots & & \\
        & & \bar{J_t}J_t - J_t\tilde{J_t} & \\
        \hline
        & & & \\
    \end{array} & 
\end{array}
\right) \label{eqn;eqn11}\\
    & + I_1 - I_2, \nonumber
    \end{align}
    where $I_1$ and $I_2$ denote the first terms of (\ref{eqn;eqn9}) and (\ref{eqn;eqn10}), respectively. Let $f^*$ denote the first term of (\ref{eqn;eqn11}). Then,
    \begin{align*}
        [F,f] &= F(f^*+I_1-I_2) - (f^*+I_1-I_2)F \\
        &= Ff^* - f^*F + (FI_1 - I_1F) - (FI_2 - I_2F).
    \end{align*}
    Here,
    \begin{align*}
        Ff^* = \left( 
        \begin{array}{c|c}
        & \scalebox{0.65}{$\begin{array}{ccc|cc}
        \tilde{J_1}(\tilde{J_1} - \bar{J_1}) & & & | & \\
        & \ddots & & -\bar{R}_{r_2}' & 0 \\
        & & \tilde{J_t}(\tilde{J_t} - \bar{J_t}) & | & \\
        \hline
        - & \tilde{C}_{r_1}' & - & -M & \\
        & 0 & & & 0
    \end{array}$}\\
    \hline
     \scalebox{0.65}{$\begin{array}{ccc|c}
        \bar{J_1}(\bar{J_1}J_1 - J_1\tilde{J_1}) & & & \quad \\
        & \ddots & & \\
        & & \bar{J_t}(\bar{J_t}J_t - J_t\tilde{J_t}) & \\
        \hline
        & & & \\
    \end{array}$} & 
\end{array}
\right),
    \end{align*}
    where 
    {\scriptsize
    \begin{align*}
        \bar{R}_{r_2}' &= (\bar{e}_{j_1 + 1}' \cdots \bar{e}_{j_{r_2} + 1}' ), \quad \bar{e}_{j_q + 1}' = \begin{cases}
            k_{j_q}^2e_{j_q + 1} & \text{if } k_1 + \cdots + k_{j_q} \in \{i_1, \ldots, i_{r_1}\}, \\
            k_{j_q}(k_{j_q} - 1)e_{j_q + 1} & \text{otherwise},
        \end{cases}\\
        \tilde{C}_{r_1}' &= (\tilde{e}_{i_1 - 1}' \cdots \tilde{e}_{i_{r_1} + 1}' ), \quad \tilde{e}_{i_p - 1}' = \begin{cases}
            k_{i_p}(k_{i_p} - 2)e_{i_p - 1} & \text{if } 1+ k_1 + \cdots + k_{i_p - 1} \in \{j_1, \ldots, j_{r_2}\}, \\
            k_{i_p}(k_{i_p} - 1)e_{i_p - 1} & \text{otherwise},
        \end{cases}
    \end{align*}
    }
    and $M = \tilde{C}_{r_1}\bar{R}_{r_2} \in \text{Mat}_{r_1 \times r_2}$.
    Moreover, we have
    \begin{align*}
        f^*F = \left( 
        \begin{array}{c|c}
        & \scalebox{0.65}{$\begin{array}{ccc|cc}
        (\tilde{J_1} - \bar{J_1})\bar{J_1} & & & | & \\
        & \ddots & & \bar{R}_{r_2}'' & 0 \\
        & & (\tilde{J_t} - \bar{J_t})\bar{J_t} & | & \\
        \hline
        - & \tilde{C}_{r_1}'' & - & M & \\
        & 0 & & & 0
    \end{array}$}\\
    \hline
     \scalebox{0.65}{$\begin{array}{ccc|c}
        (\bar{J_1}J_1 - J_1\tilde{J_1})\tilde{J_1} & & & \quad \\
        & \ddots & & \\
        & & (\bar{J_t}J_t - J_t\tilde{J_t})\tilde{J_t} & \\
        \hline
        & & & \\
    \end{array}$} & 
\end{array}
\right),
    \end{align*}
    where 
    {\scriptsize
    \begin{align*}
        \bar{R}_{r_2}'' &= (\bar{e}_{j_1 + 1}'' \cdots \bar{e}_{j_{r_2} + 1}'' ), \quad \bar{e}_{j_q + 1}'' = \begin{cases}
            k_{j_q}(2 - k_{j_q})e_{j_q + 1} & \text{if } k_1 + \cdots + k_{j_q} \in \{i_1, \ldots, i_{r_1}\}, \\
            k_{j_q}(1 - k_{j_q})e_{j_q + 1} & \text{otherwise},
        \end{cases}\\
        \tilde{C}_{r_1}'' &= (\tilde{e}_{i_1 - 1}'' \cdots \tilde{e}_{i_{r_1} + 1}'' ), \quad \tilde{e}_{i_p - 1}'' = \begin{cases}
            k_{i_p}^2e_{i_p - 1} & \text{if } 1+ k_1 + \cdots + k_{i_p - 1} \in \{j_1, \ldots, j_{r_2}\}, \\
            k_{i_p}(k_{i_p} - 1)e_{i_p - 1} & \text{otherwise}.
        \end{cases}
    \end{align*}
    }
    In addition, let $F_1 = FI_1 - I_1F$ and $F_2 = FI_2 - I_2F$. Then we have
    {\scriptsize
    \begin{align*}
        F_1 &= \left(
\begin{array}{c|c}
     & \begin{array}{cccc}
        \quad & \quad & \quad & \quad \\
        & & & \\
        & & & \\
        & & & \\
    \end{array}\\
    \hline
    \begin{array}{ccc|c}
        \bar{J}_{j_1}J_{j_1}' - J_{j_1}'\bar{J}_{j_1} & & & \quad \\
        & \ddots & & \\
        & & \bar{J}_{j_{r_2}}J_{j_{r_2}}' - J_{j_{r_2}}'\bar{J}_{j_{r_2}} & \\
        \hline
        & & & \\
    \end{array} & 
\end{array}
\right), \\
        F_2 &= \left(
\begin{array}{c|c}
     & \begin{array}{cccc}
        \quad & \quad & \quad & \quad \\
        & & & \\
        & & & \\
        & & & \\
    \end{array}\\
    \hline
    \begin{array}{ccc|c}
        \bar{J}_{i_1}J_{i_1}'' - J_{i_1}''\bar{J}_{i_1} & & & \quad \\
        & \ddots & & \\
        & & \bar{J}_{i_{r_1}}J_{i_{r_1}}'' - J_{i_{r_1}}''\bar{J}_{i_{r_1}} & \\
        \hline
        & & & \\
    \end{array} & 
\end{array}
\right).
    \end{align*}
    }
    To find the necessary and sufficient condition for $[F,f] = 0$, we first compute
    \[\tilde{J}_i(\tilde{J}_i - \bar{J}_i) - (\tilde{J}_i - \bar{J}_i)\bar{J}_i = \tilde{J}_i^2 - 2\tilde{J}_i\bar{J}_i + \bar{J}_i^2\]
    in each case:
    \begin{enumerate}[label=(\Roman*)]
        \item \label{case;case1} $k_1 + \cdots + k_i \in \{i_1, \ldots, i_{r_1} \}$ and $1 + k_1 + \cdots + k_{i-1} \in \{ j_1, \ldots, j_{r_2} \}$
        \item \label{case;case2} $k_1 + \cdots + k_i \in \{i_1, \ldots, i_{r_1} \}$ and $1 + k_1 + \cdots + k_{i-1} \notin \{ j_1, \ldots, j_{r_2} \}$
        \item \label{case;case3} $k_1 + \cdots + k_i \notin \{i_1, \ldots, i_{r_1} \}$ and $1 + k_1 + \cdots + k_{i-1} \in \{ j_1, \ldots, j_{r_2} \}$
        \item \label{case;case4} otherwise
    \end{enumerate}
    We use the notation $\tilde{J}_i$ and $\bar{J}_i$ from Lemmas \ref{lem;lemma2} and \ref{lem;lemma3}. In the case of \ref{case;case1}, we have
    \[
    \tilde{J}_i^2 - 2\tilde{J}_i\bar{J}_i + \bar{J}_i^2 \neq 0.
    \]
    However, in the cases of \ref{case;case2}, \ref{case;case3}, and \ref{case;case4}, we have
    \[
    \tilde{J}_i^2 - 2\tilde{J}_i\bar{J}_i + \bar{J}_i^2 = 0.
    \]
    Therefore, by considering the forms of $\bar{R}_{r_2}'$, $\tilde{C}_{r_1}'$, and $M$, the following condition must be satisfied in order for the upper-right block of $[F,f]$ to vanish:
    \begin{equation}\label{eqn;eqn12}
    \{J_{i_1}, \ldots, J_{i_{r_1}}\} \cap \{J_{j_1}, \ldots, J_{j_{r_2}}\} = \emptyset.
    \end{equation}

    \vspace{1pt}
    
    We now compute the lower-left block of $[F,f]$. Initially, we compute 
    \[\bar{J}_i(\bar{J}_iJ_i - J_i\tilde{J}_i) - (\bar{J}_iJ_i - J_i\tilde{J}_i)\tilde{J}_i = \bar{J}_i^2J_i - 2\bar{J}_iJ_i\tilde{J}_i + J_i\tilde{J}_i^2.\]
    As before, we consider the cases \ref{case;case1}–\ref{case;case4}. We also use the notation $\tilde{J}_i$ and $\bar{J}_i$ from Lemmas \ref{lem;lemma2} and \ref{lem;lemma3}. In the case of \ref{case;case1}, we have
    \[
    \bar{J}_i^2J_i - 2\bar{J}_iJ_i\tilde{J}_i + J_i\tilde{J}_i^2 = \begin{pmatrix}
        0 & & & & & \\
        -2k_i(k_i-1) & 0 & & & & \\
        & 0 & 0 & & & \\
        & & \ddots & \ddots & & \\
        & & & 0 & 0 & \\
        & & & & -2k_i(k_i-1) & 0
    \end{pmatrix}.
    \]
    In the case of \ref{case;case2}, we obtain
    \[
    \bar{J}_i^2J_i - 2\bar{J}_iJ_i\tilde{J}_i + J_i\tilde{J}_i^2 = \begin{pmatrix}
        0 & & & & & \\
        0 & 0 & & & & \\
        & 0 & 0 & & & \\
        & & \ddots & \ddots & & \\
        & & & 0 & 0 & \\
        & & & & -2k_i(k_i-1) & 0
    \end{pmatrix}.
    \]
    In the case of \ref{case;case3}, we obtain
    \[
    \bar{J}_i^2J_i - 2\bar{J}_iJ_i\tilde{J}_i + J_i\tilde{J}_i^2 = \begin{pmatrix}
        0 & & & & & \\
        -2k_i(k_i-1) & 0 & & & & \\
        & 0 & 0 & & & \\
        & & \ddots & \ddots & & \\
        & & & 0 & 0 & \\
        & & & & 0 & 0
    \end{pmatrix}.
    \]
    In the case of \ref{case;case4}, we obtain
    \[ \scriptsize
    \bar{J}_i^2J_i - 2\bar{J}_iJ_i\tilde{J}_i + J_i\tilde{J}_i^2 = \begin{pmatrix}
        0 & & & & & \\
        -2(k_i-1) & 0 & & & & \\
        & \ddots & & & & \\
        & & -2\alpha(k_i - \alpha) & \ddots & & \\
        & & & \ddots & 0 & \\
        & & & & -2(k_i-1) & 0
    \end{pmatrix}.
    \]
    The remaining part of the lower-left block of $[F,f]$ is given by $F_1$ and $F_2$. Note that
    \begin{align*}
        \bar{J}_{j_q}J_{j_q}' - J_{j_q}'\bar{J}_{j_q} &= \begin{pmatrix}
        0 & & & & \\
        2k_{j_q}(k_{j_q}-1) & 0 & & & \\
        & & \ddots & & \\
        & & & & 0 \\
    \end{pmatrix},\\
        \tilde{J}_{i_p}J_{i_p}'' - J_{i_p}''\tilde{J}_{i_p} &=
        \begin{pmatrix}
        0 & & & & \\
        & & \ddots & & \\
        & & & 0 & \\
        & & & -2k_{i_p}(k_{i_p}-1) & 0 \\
    \end{pmatrix}
    \end{align*}
    for each $q = 1, \ldots, r_2$ and $p = 1, \ldots, r_1$.

    In order for $[F,f] = 0$, the condition (\ref{eqn;eqn12}) must be satisfied. Hence, the case \ref{case;case1} of the lower-left block can be disregarded. Moreover, to ensure that the lower-left block of $[F,f]$ vanishes, the case \ref{case;case4} must be discarded unless $k_i = 1$, since its nonzero entries cannot be canceled by $F_1$ and $F_2$. Therefore,
    \begin{align*}
[F,f] = 0 \quad &\text{if and only if} \\[2pt]
\{J_{i_1}, \ldots, J_{i_{r_1}}\} &\cap \{J_{j_1}, \ldots, J_{j_{r_2}}\} = \emptyset, \quad \text{and for each } i, \\
&\text{either case } \ref{case;case2}, \text{ case } \ref{case;case3}, \text{ or } k_i = 1.
\end{align*}
    Hence, if $[H,e]=e$, we have 
    \[
    \{J_1, \ldots, J_t\} = \{J_{i_1}, \ldots, J_{i_{r_1}}\} \sqcup \{J_{j_1}, \ldots, J_{j_{r_2}}\}
    \]
    and this decomposition satisfies the conditions on the right-hand side.
\end{proof}

\begin{proof}[Proof of the Lemma \ref{lem;lemma5}]
    By Lemma \ref{lem;lemma4}, it suffices to prove the following.
    \begin{align*}
[H,e] = e \quad &\text{and} \quad [F,f] = 0 
\quad \text{if and only if} \\[2pt]
\{J_1, \ldots, J_t\} &= \{J_{i_1}, \ldots, J_{i_{r_1}}\} \;\sqcup\; \{J_{j_1}, \ldots, J_{j_{r_2}}\}, 
\quad \text{and} \quad s = 0.
\end{align*}
    The forward implication is proved by Lemma \ref{lem;lemma6}.\\
    The converse is proved by Lemmas \ref{lem;lemma6} and \ref{lem;lemma7}.
\end{proof}

\vspace{1pt}

\subsection{Proof of the theorem}
It now suffices to prove the following.
\begin{lemma}\label{lem;lemma8}
    Let $e \in \mathfrak{g} = \mathfrak{gl}(m|n)$ be an odd nilpotent element. Then $e$ lies in the $G_{\bar{0}}$-orbit of a super Jordan matrix composed entirely of super Jordan blocks of odd size if and only if
\[
\{J_1, \ldots, J_t\} = \{J_{i_1}, \ldots, J_{i_{r_1}}\} 
\;\sqcup\; \{J_{j_1}, \ldots, J_{j_{r_2}}\}, 
\quad \text{and} \quad s = 0.
\]
\end{lemma}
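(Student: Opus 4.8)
The plan is to translate the abstract combinatorial condition on the Jordan data into a concrete statement about super Jordan blocks, and to show both conditions are equivalent to requiring that every odd nilpotent orbit representative decomposes into super Jordan blocks of odd size. First I would recall that, by the theorem of Jenkins and Nakano, $e$ lies in the $G_{\bar 0}$-orbit of the representative in (\ref{eqn;eqn4}), whose data are encoded by the Jordan blocks $J_1, \ldots, J_t$ of sizes $k_1 \geq \cdots \geq k_t$ together with the index sets $\{i_1, \ldots, i_{r_1}\}$ and $\{j_1, \ldots, j_{r_2}\}$ and the integer $s$ counting the size of the $I_s$ block in $e^-$. The first step is to interpret the pair $(C_{r_1}, R_{r_2})$ geometrically: the block $J_i$ contributes a column attachment (via $C_{r_1}$) exactly when $k_1 + \cdots + k_i \in \{i_1, \ldots, i_{r_1}\}$, and a row attachment (via $R_{r_2}$) exactly when $1 + k_1 + \cdots + k_{i-1} \in \{j_1, \ldots, j_{r_2}\}$. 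I would track how $e$ acts on the underlying superspace $V = V_{\bar 0} \oplus V_{\bar 1}$, reading off a basis adapted to these attachments.

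Next I would show that each Jordan block $J_i$ of $E = e^2$, once it is linked to $V_{\bar 1}$ by exactly one attachment (a column via $C_{r_1}$ or a row via $R_{r_2}$, but not both and not neither), assembles together with the vector it attaches to into a single $e$-cyclic subspace whose basis alternates in parity; this is precisely a super Jordan block of $e$. A column attachment of $J_i$ joins a $k_i$-dimensional even string to one odd vector, producing a super Jordan block of size $2k_i + 1$, which is odd; likewise a row attachment produces a super Jordan block of size $2k_i+1$. Conversely, if some $J_i$ receives no attachment, or both a row and column attachment, or if $s \neq 0$ (meaning there is an $I_s$ block giving a free even–even pairing with no alternating cyclic structure), then the corresponding $e$-cyclic subspace has even total dimension and is a super Jordan block of even size. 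Thus the condition $\{J_1, \ldots, J_t\} = \{J_{i_1}, \ldots, J_{i_{r_1}}\} \sqcup \{J_{j_1}, \ldots, J_{j_{r_2}}\}$ says precisely that every $J_i$ is attached exactly once, and $s = 0$ removes the remaining even-size super Jordan blocks.

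Concretely, I would carry out the argument as follows. For each $J_i$ attached via a single column, I would exhibit the ordered alternating basis $\{w_{2k_i+1}, w_{2k_i}, \ldots, w_1\}$ by interleaving the even chain generated from the top of $J_i$ with the odd vector that $e^+$ or $e^-$ maps it to, and verify that in this basis $e$ is the super Jordan block $J^{\text{super}}_{0, 2k_i+1}$ of odd size $2k_i + 1$. The analogous computation handles a single row attachment. This produces the change of basis $\mathcal{B} \rightsquigarrow \mathcal{B}'$ realizing $e$ as a super Jordan matrix with all blocks of odd size, which establishes the ``if'' direction. For the ``only if'' direction I would argue contrapositively: an unattached $J_i$ contributes an isolated even Jordan string that forms a super Jordan block of even size $2k_i$, a doubly-attached $J_i$ likewise fails to produce an alternating cyclic structure of odd length, and a nonzero $I_s$ contributes a further even-size block. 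In each case the resulting super Jordan matrix contains a block of even size, so $e$ cannot lie in the $G_{\bar 0}$-orbit of a super Jordan matrix with only odd blocks.

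The main obstacle I anticipate is the bookkeeping in the parity-alternating change of basis: one must verify that, after interleaving the even string of $J_i$ with its single attached odd vector, the matrix of $e$ is genuinely the super Jordan block $J^{\text{super}}_{0, 2k_i+1}$ and not merely conjugate to an upper-triangular nilpotent of the right size, and that the resulting basis is a legitimate ordered basis of alternating parities of the type displayed in $\mathcal{B}'$. The invariance of the parity-graded cyclic decomposition under the $G_{\bar 0} = \mathrm{GL}_m \times \mathrm{GL}_n$ action (which preserves parity) is what guarantees that the super Jordan block sizes are orbit invariants, so that passing from $e$ to its representative in (\ref{eqn;eqn4}) does not change whether the super Jordan matrix has only odd blocks; I would state this invariance explicitly and use it to reduce the whole lemma to the single representative.
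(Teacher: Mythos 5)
Your proposal is correct and follows essentially the same route as the paper: reduce to the Jenkins--Nakano representative of the form (\ref{eqn;eqn4}), generate $x$-cyclic chains (which alternate in parity automatically, since $x$ is odd) from the vectors singled out by the attachments $C_{r_1}$ and $R_{r_2}$ to obtain odd super Jordan blocks of sizes $2k_{i_p}+1$ and $2k_{j_q}+1$, and argue the converse contrapositively by exhibiting even-size blocks ($2k_\alpha$ for an unattached block, $2k_{i_{\beta_1}}+2$ for a doubly-attached block, and $2$ when $s \neq 0$), exactly as in the paper's case analysis. Your explicit remark that the parity-graded cyclic decomposition is a $G_{\bar{0}}$-orbit invariant, which justifies passing to the representative, is a point the paper leaves implicit, and aside from minor descriptive slips (the $I_s$ block pairs an even vector with an odd one, and each odd block interleaves $k_i$ even with $k_i$ odd vectors plus one extra vector) the plan matches the published proof.
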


\begin{proof} First, assume the left-hand side. We denote the odd nilpotent element by $x$ instead of $e$ throughout this proof, in order to avoid confusion with the standard unit vectors $e_i$. In addition, for convenience, we parametrize an ordered basis by the set 
\[\{1, \ \ldots \ , \ m \ ; \ m+1, \ \ldots \ , m+n\}\] 
instead of 
\[I(m|n) = \{\bar{1}, \ \ldots \ , \ \bar{m} \ ; \ 1, \ \ldots \ , n\}.\] Recall that the odd nilpotent element $x \in \mathfrak{gl}(m|n))$ has the form in (\ref{eqn;eqn4}). Given $C_{r_1} = (e_{i_1}\cdots e_{i_{r_1}})$, take a standard unit vector $e_{r+p}$ with $p = 1, \ldots, r_1$. Then there is no standard unit vector $e_i$ such that $xe_i = e_{r+p}$. Consequently, starting from $e_{r+p}$ and applying $x$ repeatedly yields the sequence of standard unit vectors
    \[
    e_{r+p},\ xe_{r+p},\ x^2e_{r+p},\ \ldots \ , \ x^{2k_{i_p}}e_{r+p},
    \]
    where each term satisfies
    \[x^{2u}e_{r+p} = e_{i_p - u +1}, \quad  \text{and} \quad x^{2u-1}e_{r+p} = e_{m+i_p-u+1}\]
    for $u = 1, \ldots, k_{i_p}$. Moreover, $x^{2k_{i_p}+1}e_{r+p} = 0$. Therefore, the ordered set 
    \[\{x^{2k_{i_p}}e_{r+p}, \ x^{2k_{i_p}-1}e_{r+p}, \ \ldots \ , \ xe_{r+p},\ e_{r+p} \}\]
    forms a super Jordan block of size $2k_{i_p}+1$, which is odd.
    Next, given $R_{r_2} = (e_{j_1} \ldots e_{j_{r_2}})^t$, let $\bar{q} = j_q - 1+k_{j_q}$ and take a standard unit vector $e_{m+\bar{q}}$, where $q = 1, \ldots, r_2$. Note that \[\bar{q} = j_q - 1+k_{j_q} = k_1+ \cdots +k_{j_q}.\]
    Then there is no standard unit vector $e_i$ such that $xe_i = e_{m+\bar{q}}$. Hence, we obtain the following sequence of standard unit vectors
    \[
    e_{m+\bar{q}}, \ xe_{m+\bar{q}}, \ x^2e_{m+\bar{q}}, \ \ldots \ ,  \ x^{2k_{j_q}}e_{m+\bar{q}},
    \]
    where each term satisfies 
    \[x^{2u}e_{m+\bar{q}} = e_{m+\bar{q} - u}, \quad \text{for} \; u = 1, \ldots k_{j_q}-1,\] 
    and 
    \[x^{2u-1}e_{m+\bar{q}} = e_{\bar{q} - u+1}, \quad \text{for} \; u = 1, \ldots k_{j_q}.\]
    Note that 
    \[x^{2k_{j_q}-1}e_{m+\bar{q}} = e_{\bar{q} - k_{j_q} +1} = e_{jq}, \quad \text{and} \quad x^{2k_{j_q}}e_{m+\bar{q}} = xe_{jq} = e_{m+r+q}.\]
    Moreover, $x^{2k_{j_q}+1}e_{m+\bar{q}} = 0$. Therefore, the ordered set
    \[
    \{x^{2k_{j_q}}e_{m+\bar{q}}, \ x^{2k_{j_q}-1}e_{m+\bar{q}}, \ \ldots \ , \ xe_{m+\bar{q}}, \ e_{m+\bar{q}}\}
    \]
    forms a super Jordan block of size $2k_{j_q}+1$, which is odd.
    Since $\{J_1, \ldots, J_t\} = \{J_{i_1}, \ldots, J_{i_{r_1}}\} \;\sqcup\; \{J_{j_1}, \ldots, J_{j_{r_2}}\}$, we obtain
    \[
    (k_{i_1} + \cdots + k_{i_{r_1}}) + (k_{j_1} + \cdots + k_{j_{r_2}}) = r.
    \]
    Therefore, the total sum of the sizes of the super Jordan blocks constructed above is
    \[
    (2k_{i_1}+1) + \cdots + (2k_{i_{r_1}}+1) + (2k_{j_1}+1) + \cdots + (2k_{j_{r_2}}+1) = (r+r_1) + (r+r_2).
    \]
    Note that each set of individual standard unit vectors
    \[
    \{e_{r+r_1+u}\}_{u=1}^{m-r-r_1}, \quad \{e_{m+r+r_2+u}\}_{u=1}^{n-r-r_2}
    \]
    forms a super Jordan block of size 1.
    The standard unit vectors forming these super Jordan blocks are mutually disjoint, and the sum of the sizes of all super Jordan blocks is
    \[
    (r+r_1) + (r+r_2) + (m-r-r_1) + (n-r-r_2) = m+n.
    \]
    Therefore, $x$ consists solely of super Jordan blocks of odd size.\\
    
    Conversely, suppose $s \neq 0$ and consider $e_{r+r_1+1}$. Then there is no standard unit vector $e_i$ such that $xe_i = e_{r+r_1+1}$. We have
    \[
    xe_{r+r_1+1} = e_{m+r+r_2+1}, \quad x^2e_{r+r_1+1} = 0.
    \]
    Hence, the set $\{xe_{r+r_1+1}, e_{r+r_1+1}\}$ forms a super Jordan block of size 2, which is even. Therefore, $s = 0$.
    Now, suppose that
    \[
    \{J_1, \ldots, J_t\} \neq \{J_{i_1}, \ldots, J_{i_{r_1}}\} \sqcup \{J_{j_1}, \ldots, J_{j_{r_2}}\}.
    \]
    We have two cases:
    \begin{enumerate}[label=(\roman*), leftmargin=0pt]
        \item There exists $J_{\alpha} \in \{J_1, \ldots, J_t\}$ such that
        \[
        J_{\alpha} \notin \{J_{i_1}, \ldots, J_{i_{r_1}}\} \cup \{J_{j_1}, \ldots, J_{j_{r_2}}\}.
        \]
        Let $l_{\alpha} = k_1+\cdots +k_{\alpha}$, and consider the standard unit vector $e_{m+l_{\alpha}}$. Since $J_{\alpha} \notin \{J_{i_1}, \ldots, J_{i_{r_1}}\}$, there is no standard unit vector $e_i$ such that $xe_i = e_{m+l_{\alpha}}$. We then have a sequence
        \[
        e_{m+l_{\alpha}}, \ xe_{m+l_{\alpha}}, \ x^2e_{m+l_{\alpha}}, \ \ldots \ , \ x^{2k_{\alpha}-1}e_{m+l_{\alpha}},
        \]
        where each term satisfies
        \[x^{2u}e_{m+l_{\alpha}} = e_{m+l_{\alpha}-u}, \quad \text{for} \; u = 1, \ldots, k_{\alpha}-1\]
        and 
        \[x^{2u-1}e_{m+l_{\alpha}} = e_{l_{\alpha}-u+1}, \quad \text{for} \; u = 1, \ldots, k_{\alpha}.\] 
        Note that $x^{2k_{\alpha}}e_{m+l_{\alpha}} = 0$. Therefore, the ordered set
        \[
        \{x^{2k_{\alpha}-1}e_{m+l_{\alpha}},\ x^{2k_{\alpha}-2}e_{m+l_{\alpha}}, \ \ldots \ , \ xe_{m+l_{\alpha}}, \ e_{m+l_{\alpha}}\}
        \]
        forms a super Jordan block of size $2k_{\alpha}$, which is even.
        \item There exists $J_{\beta} \in \{J_1, \ldots, J_t\}$ such that
        \[
        J_{\beta} \in \{J_{i_1}, \ldots, J_{i_{r_1}}\} \cap \{J_{j_1}, \ldots, J_{j_{r_2}}\}.
        \]
        Let $J_{\beta} = J_{i_{\beta_1}} = J_{j_{\beta_2}},$ where $\beta_1 \in \{1, \ldots, r_1 \}$ and $\beta_2 \in \{1, \ldots, r_2\}$. Consider $e_{r+\beta_1}$. Then there is no standard unit vector $e_i$ such that $xe_i = e_{r+\beta_1}$. We then have a sequence
        \[
        e_{r+\beta_1}, \ xe_{r+\beta_1}, \ x^2e_{r+\beta_1}, \ \ldots \ , \ x^{2k_{i_{\beta_1}}+1}e_{r+\beta_1},
        \]
        where each term satisfies 
        \[x^{2u}e_{r+\beta_1} = e_{i_{\beta_1}-u+1}, \quad \text{and} \quad x^{2u-1}e_{r+\beta_1} = e_{m+i_{\beta_1} - u + 1}, \quad \text{for} \; u = 1, \ldots , k_{i_{\beta_1}}.\]
        Note that 
        \[x^{2k_{i_{\beta_1}}+1}e_{r+\beta_1} = e_{m+r+\beta_2}, \quad \text{and} \quad x^{2k_{i_{\beta_1}}+2}e_{r+\beta_1} = 0.\] 
        Therefore, the ordered set
        \[
        \{x^{2k_{i_{\beta_1}}+1}e_{r+\beta_1}, \ x^{2k_{i_{\beta_1}}}e_{r+\beta_1}, \ \ldots \ , \ xe_{r+\beta_1}, \ e_{r+\beta_1}\}
        \]
        forms a super Jordan block of size $2k_{i_{\beta_1}}+2$, which is even.
    \end{enumerate}
    Therefore, we conclude that
    \[
    \{J_1, \ldots, J_t\} = \{J_{i_1}, \ldots, J_{i_{r_1}}\} \;\sqcup\; \{J_{j_1}, \ldots, J_{j_{r_2}}\}.
    \]
    This completes the proof of Lemma \ref{lem;lemma8} and, consequently, that of Theorem \ref{thm;theorem1}.
\end{proof}

\end{document}